\newtheorem{remark}[theorem]{Remark}
\newcommand{\real}{{\mathbb R}}
\newcommand{\posint}{{\mathbb Z}_+}
\newcommand{\integ}{{\mathbb Z}}
\newcommand{\nat}{{\mathbb N}}
\newcommand{\sC}{{\mathcal C}}
\newcommand{\sM}{{\mathcal M}}
\newcommand{\sP}{{\mathcal P}}
\newcommand{\sF}{{\mathcal F}}
\title{Convergence of moments of tau leaping schemes for unbounded Markov processes 
on integer lattices} 
\author{Muruhan Rathinam\thanks{ Mathematics and Statistics, University of
Maryland Baltimore County, 1000 Hilltop Circle, Baltimore, MD 21250, {\tt muruhan@umbc.edu}, Ph 410-455-2423, Fax 410-455-1066. This research was
supported in part by grant NSF DMS-0610013.} }
\begin{document}

\maketitle
%\date{}

\begin{abstract}
Tau leap schemes were originally designed for the efficient time stepping
of discrete state and continuous in time Markov processes arising in stochastic chemical kinetics.
Previous convergence results on tau leaping schemes have been restricted to 
systems that remain in a bounded subdomain (which may depend on the initial condition) or satisfy global
Lipschitz conditions on propensities. This paper extends the convergence results to fairly general tau leap schemes 
applied to unbounded systems that possess certain moment growth bounds. 
Specifically, we prove a weak convergence result, which shows order $q$
convergence of all moments under certain form of moment growth bound
assumptions on the stochastic chemical system and the tau leap method, as well 
as polynomial bound assumption on the propensity functions. 
The results are stated for a general class of Markov processes with 
$\integ^N$ as their state space. 
\end{abstract}

\begin{keywords}
Stochastic chemical kinetics, tau leaping, error analysis.
\end{keywords}
\begin{AMS}
60H35, 65C30.
\end{AMS}

\thispagestyle{plain}

\section{Introduction}
The well stirred model of a chemical system as a continuous time 
Markov process with state space $\posint^N$ has been known for several
decades \cite{Gillespie76, Gillespie77, VanKampen}. Exact simulation of 
sample paths of such processes is very simple and is commonly known
as the SSA (abbreviation for Stochastic Simulation Algorithm) or the Gillespie
algorithm \cite{Gillespie77}. Stochastic chemical models have become important 
in applications in intracellular mechanisms and these models often possess some species in small molecular
copy numbers as well as a range of time scales in addition to nonlinear propensity functions. 
Hence approximations of the whole system by ordinary differential equations (ODEs) or even stochastic differential equations (SDEs) 
driven by Brownian motion is often not valid. 
On the other hand the SSA is often prohibitively expensive. Tau leaping methods 
were proposed as efficient but approximate alternatives to the SSA simulations. 

While the exact simulation (SSA) accounts for reaction events one at a time,
the tau leap methods take a predetermined time step and then provide an approximation of 
the random state at the end of the time step using some criterion. 
Thus tau leap simulation of
sample paths are akin to time stepping methods for ordinary 
differential equations (ODEs) and stochastic differential equations (SDEs)
driven by Brownian motion. The first tau leap method was proposed by
Gillespie \cite{Gillespie01} and is now known as the explicit tau leap method. 
This is in spirit the same as the explicit Euler method for ODEs. 
The implicit tau leap method 
was introduced in \cite{rathinam-petzold+03_jcp}
and the trapezoidal tau leap method may be 
found in \cite{cao-petzold-rathinam+04jcp}. Several other tau leap methods have
been proposed in the literature since then, see \cite{yang-rathinam+JCP11} for instance and references therein.

\subsection{Previous error analyses of tau leap methods}
As tau leap methods are analogous to the time
stepping methods for SDEs (driven by Brownian motion) and ODEs, the 
question of convergence is a natural one, where convergence is studied for a 
fixed time interval $[0,T]$ with mesh size $\max{(t_{j+1}-t_j)} \to 0$ where $0=t_0 < t_1 < \dots < t_n=T$ 
is the mesh used by the time stepping method. However, unlike the case of 
ODEs and SDEs, exact simulation is possible in the case of discrete state 
(continuous in time) Markov processes because the state of the process changes
via discrete events happening in continuous time. 
This means that if the step size of the tau leap method is very small one 
may expect on average no more than one event to occur during a time step, 
and hence the tau leap will no longer be more efficient than the exact simulation method! This fact has lead to interesting 
discussions and analyses. 

It was first shown in \cite{rathinam+05MMS} that both the explicit 
and the implicit tau methods are first order convergent in all moments for 
systems that remain in a bounded region (which may depend on the initial 
condition) of the state space under the assumption of linear propensity 
functions. It was later proven in \cite{Li-07MMS} that under the same 
bounded domain assumption but for general (nonlinear) propensity functions 
that the explicit tau method is first order convergent in moments 
as well as order $1/2$ convergent in a strong sense. Weak error analysis of 
explicit tau leap method with a ``Poisson bridge'' interpolation was provided 
in \cite{Karlsson-Tempone-11}.   

It must be noted that in the literature on 
numerical methods for stochastic dynamical systems the terms {\em strong
  error} and {\em weak error} are used in a slightly different sense from 
that of functional analysis. Strong error refers to 
the error $\hat{X}(t)-X(t)$ between the numerical approximation $\hat{X}$ of 
the process $X$ usually measured in the $L_1(\Omega,\sF,\text{Prob}))$ 
or $L_2(\Omega,\sF,\text{Prob})$ sense where $(\Omega,\sF,\text{Prob})$ is 
the common probability space which carries both the process $X$ as well as its
approximation $\hat{X}$. In the context of continuous time Markov processes 
on $\integ^N$ it is not always easy to find a good coupling of $X$
and $\hat{X}$ (unless one derives the method $\hat{X}$ starting from a
stochastic equation for instance with the aid of the random time change
representation \cite{EK-book} or with the aid of {\em Poisson random
  measures} \cite{Li-07MMS}) and there may be different ways to couple $X$ and
$\hat{X}$ leading to potentially different strong errors. Often one is 
interested in the error between the distribution of $X(t)$ and $\hat{X}(t)$. In particular for 
a function $f:\integ^N \to \real$ one considers the error
$E(f(\hat{X}(t)))-E(f(X(t)))$. This form of error analysis
is termed {\em weak error} analysis. Usually $f$ is taken to be a bounded
function on $\integ^N$ following the standard notion of {\em weak convergence}
of probability measures \cite{EK-book}. When the process $X$ as well as the
numerical scheme $\hat{X}$ remain in a bounded subset of $\integ^N$ no
assumption on $f$ is needed. However, it must be noted that when the process $X$ is not bounded 
and $f$ is taken to be a polynomial of degree higher than $2$, 
strong $L_2$ convergence will require additional regularity conditions in order to imply the convergence of $E(f(\hat{X}(t)))$ to $E(f(X(t)))$.       

When the molecular copy numbers are large, the stochastic chemical model 
may be well approximated by the reaction rate ODEs
\cite{Gillespie01}. This behavior is known as the {\em thermodynamic limit} in
 the applied sciences literature where one considers starting with the 
initial number of molecular copy numbers and the corresponding system volume, 
and then envisages a sequence of systems obtained by multiplying the initial 
copy numbers as well as the system volume by an integer $N$ and considering 
the behavior as $N \to \infty$. In order to obtain a limit, one must rescale
the process by $N$ and additionally a specific form of dependence of the 
propensities (probabilistic form of reaction rates) on the system volume is
critical for this limiting behavior to occur. This specific form of volume 
dependence or more abstractly ``system size'' dependence occurs commonly 
in many real world systems including stochastic chemical kinetics and is referred to as {\em density
  dependence} in the works of T.G. Kurtz where a rigorous proof of the limit
is also provided, see
\cite{EK-book} for instance. 

A natural question is how does a tau leap method 
behave when the system size becomes large. Some tau leap methods resemble
higher order numerical schemes for ODEs while the other tau leap methods 
resemble lower order schemes. This has motivated researchers to
incorporate system size into the error analysis of tau leap methods. The first
such analysis appeared in \cite{Anderson-Ganguly+AAP11} where the analysis 
investigates the explicit tau method as well as the midpoint
tau method. In particular the error analysis is carried out 
under the setting where the step size $\tau$ is related to system size
$V$ in the form of $\tau = V^{-\beta}$. This analysis is able to explain why 
when system size is sufficiently large the midpoint tau method performs better
than the explicit tau method. This analysis is also able to
explain why tau leap methods are effective while still leaping over several 
reaction events, when system size is sufficiently large. A system size dependent weak error analysis also
appears in \cite{Hu-Li+CMS11} where a rooted directed graph representation is
developed for weak Taylor expansions.  A weak error analysis under more general form of scaling with system size for general tau leap methods is presented in 
\cite{Anderson-Koyama-MMS12}. A related result shows that a large class of 
split step implicit tau leap methods limit to the implicit Euler scheme 
in the large volume limit while step size $\tau$ is fixed
\cite{Yang-Rathinam-JCompP13}. 

All convergence results for tau leap methods mentioned above \cite{rathinam+05MMS,
  Li-07MMS, Karlsson-Tempone-11, Anderson-Ganguly+AAP11,Anderson-Koyama-MMS12,Hu-Li+CMS11}
effectively apply only to systems that remain in a bounded domain. In
particular the Lipschitz or bounded derivative assumptions on propensity functions are only valid for either systems with
linear propensity functions or systems that remain in a bounded domain. 
While closed chemical systems satisfy the boundedness assumption 
due to conservation of atoms, in practice the assumption of a closed system 
is restrictive. Several models of biochemical systems have production of 
chemical species captured by reactions that may be described abstractly 
in the form $S \rightarrow S + A$. 

Related but different error analyses of time stepping methods for 
stochastic processes with jumps may be found in 
\cite{Hausenblaus-02, Glasserman-Merener-04, Higham-Kloeden-05} to mention 
a few. These articles are concerned with stochastic equations driven by 
Brownian motion and Poisson random measures. The first two works
\cite{Hausenblaus-02, Glasserman-Merener-04} consider fairly general jump 
processes but assume coefficient functions to be globally Lipschitz or possess 
bounded derivatives. The work in \cite{Higham-Kloeden-05} proves convergence 
of moments under the less restrictive one-sided Lipschitz condition on the 
drift coefficient but nevertheless assumes global Lipschitz condition on the 
coefficients corresponding to the Brownian and Poisson processes.  
Moreover the Poisson process considered has fixed intensity. None of these 
results are applicable to the chemical kinetic models with nonlinear 
propensities when the system is unbounded.   

\subsection{Error analysis in this work}
The important feature of the weak convergence result proved in this paper is 
that it does not assume boundedness of the system and moreover in the error 
$E(f(\hat{X}(t)))-E(f(X(t)))$ the function $f$ need not be
bounded, but is assumed to satisfy a polynomial growth bound. A form of moment 
growth bound (as a function of time) is assumed on the process $X$ 
and one may find sufficient conditions in \cite{rathinam-QAM-14, Engblom-14, Gupta-Briat+PLOS14} that ensure such bounds. 
The result applies to any tau leap method provided  
that it yields integer valued states, satisfies similar moment growth bound 
conditions as the chemical system, possesses pointwise local error of order 
$q+1$ and in addition satisfies certain bounds on the time 
derivative of moments. The analysis technique does not differentiate 
between explicit or implicit methods and applies to both provided they 
satisfy above conditions. The convergence proof does not apply to the 
(unrounded) implicit tau for instance since it yields noninteger states. 
However, it applies to split step implicit methods such as those in \cite{yang-rathinam+JCP11}.  
 
The proof technique involves establishing 
consistency and uniform boundedness (or zero stability) of the method 
in a certain family of norms and related metrics in the space of probability 
measures on a finite 
dimensional integer lattice which possess finite moments of all orders.  
Thus the proof is more in the spirit of the proof technique for ODEs 
though the spaces are infinite dimensional.  It must be noted that the 
notion of {\em zero stability} (see \cite{Ascher-Petzold-book} for instance) 
of a numerical scheme is an important concept. Essentially any sensible
numerical scheme closely approximates the exact process over one time step 
$\tau$ which is sufficiently small. But as $\tau \to 0$, the number of steps 
over a finite interval $[0,T]$ increases to $\infty$, and zero-stability 
requires that the numerical scheme is well behaved (uniformly bounded) under
this situation. 

%\subsection{Discussion of the types of error analyses}
The analysis in this paper does not consider scaling with system size into
account as is done in \cite{Anderson-Ganguly+AAP11,Anderson-Koyama-MMS12}. 
For the analysis in this paper, the system size $V$ is fixed while step size $\tau$
approaches zero. There has been some debate about which type of analysis is
better or even ``correct''. In other words, whether the step size $\tau$ 
should be taken as a function of system size parameter $V$,
 typically in the form of $\tau = V^{-\beta}$, and study the limiting behavior
 as $V \to \infty$, or following the more conventional analysis (where $V$ is
 fixed), study the limiting behavior as $\tau \to 0$. 
While the system size analysis provides valuable insights, a serious criticism 
of taking step size $\tau$ as a function $\tau =V^{-\beta}$ of $V$ is that 
the quantity $V$ is a given and not under the control of the user, 
while the step size $\tau$ is. Thus halving the step size $\tau$ to
``check for convergence'' will not be captured by this type of analysis. 
A good discussion highlighting 
the benefits of both types of analysis may be found in \cite{Hu-Li+CMS11} 
and we agree with the sentiments expressed there in that both types of analysis are relevant.    
Regarding the importance of fixed $V$ analysis, it must be emphasized that 
if a tau leap method is not zero stable or not convergent then the user is 
potentially operating on a shaky ground. 
%To see this, in a large system, there may be
%several reaction channels firing at different propensities (rates) 
%and practitioners may find themselves in situations where the $\tau$ 
%step may leap over several reaction events on average for most reaction
%channels but for some reaction channels the step may only leap over zero, one or two events on average.  In such a situation the use
%of a scheme that is not zero stable
% (i.e.\ for fixed $V$ the method behaves badly as $\tau \to 0$ with number of
% steps approaching $\infty$) 
%will be problematic.   
To put this another way, when using a zero stable method a practitioner only 
needs to worry about whether $\tau$ is small enough when it comes to accuracy 
issues. On the other hand if the 
practitioner uses a method that is not zero stable (s)he has to worry about 
whether $\tau$ is large enough as well as small enough, a very unsettling 
situation! 
Thus we believe that this form of convergence (or at least zero stability) 
is necessary and that the analysis represents an important improvement 
over previous results in that it accommodates unbounded systems with nonlinear propensities.

%\subsection{Role of tau leaping}
%As mentioned earlier, the SSA is computationally expensive in several reaction systems
%arising in stochastic models of chemical kinetics. These models often possess a large range of molecular
%copy numbers as well as a large range of time scales in addition to nonlinear propensity functions. 
%As such deterministic or even SDE (driven by Brownian motion) approximations of the whole system is not valid. 
%Several approximate methods which involve treating different parts of the system differently have been proposed 
%(see for instance \cite{haseltine-rawlings-02, rao-arkin-03, cao-gillespie+05_ssSSA}). 
%A systematic study based on asymptotic stochastic analysis which unifies these 
%different approaches may be found in \cite{Ball-Kurtz+06}.  
  
\subsection{Outline of the paper}
The rest of the paper is organized as follows. Section 2 deals with 
mathematical preliminaries and proves some results which are relevant for 
the convergence proof. Section 3 presents the convergence proof. 
Section 4 provides some results on the verification of the assumptions that underly the
convergence proof. Second part of Section 4 specifically considers tau leap
methods using Poisson and binomial updates which are common to most tau leap
methods. Section 5 provides some concluding remarks.  

\section{Mathematical setup and preliminaries}
\subsection{Chemical process and tau leap approximation}
\label{sec-process-tau}
We shall be concerned with continuous time Markov chains that take 
values on the state space $\integ^N$ that have certain specific structure.
The origin of this structure comes from stochastic models of chemical kinetics
where $N$ different molecular species undergo $M$ different reaction 
channels, and hence our rationale for the term {\em 
chemical process}. The state of a stochastic chemical process 
is an $N$ dimensional (nonnegative) integer vector
such that the $i$th component of the vector stands for total the number of
molecules of the $i$th species.
The specific structure dictates that for any given state $x \in \posint^N$ 
there are at most $M$ other states that the process can jump to and
the possible jump sizes are independent of the state $x$ and time $t$.
These jump sizes are {\em stoichiometric vectors}
$\nu_1,\dots,\nu_M \in \integ^N$ which correspond to the $M$ different reaction 
channels. Associated with each stoichiometric vector $\nu_j$ there is
a jump rate or {\em propensity} (in the chemical kinetics terminology)
$a_j(x)$ which in general is a function of the state $x$.
We define $a_0(x)$ by $a_0(x) = \sum_{j=1}^M a_j(x)$.
In our general result in Section 3 we consider the slightly more general (than
the chemical kinetic systems) case
where the state space is $\integ^N$. In Section 4 we mostly specialize to the
case of non-negative state space. 

Given $N,M \in \nat$, {\em stoichiometric vectors} 
$\nu_1,\dots,\nu_M \in \integ^N$, and {\em propensity functions} 
$a_j:\integ^N \rightarrow \real$ for $j=1,\dots,M$, we define the associated 
{\em chemical process} $X(t)$ for $t \in [0,\infty)$ to be a 
$\integ^N$ valued Markov process which only admits jump sizes 
$\nu_1,\dots,\nu_M \in \integ^N$ with corresponding intensities
$a_j(x)$ for $j=1,\dots,M$. This means that given $X(t)=x$, the 
waiting time for the next jump event is exponentially distributed with 
rate $a_0(x)$ and the probability that the next jump is of size $\nu_j$ 
is $a_j(x)/a_0(x)$. We shall consider the version of $X(t)$ that has
right continuous paths with left hand limits (known as {\em cadlag}). We shall only be concerned with
chemical processes that are {\em non-explosive}, i.e.\ do not have infinitely 
many jumps in any finite time interval. 

Given a chemical process $X(t)$ with $N$ species and $M$ reaction channels, 
we may define the transition probabilities
$P:[0,\infty) \times \integ^N \times \integ^N \rightarrow \real$ by
\begin{equation}
P(\tau,x,x^\prime) = \text{Prob}\{ X(t+\tau)=x^\prime | \, X(t)=x \}.
\label{eq_def_P}
\end{equation}
By the non-explosivity assumption, it follows that for each $\tau \geq 0$, we have 
\[
\sum_{x^\prime \in \integ^N} P(\tau,x,x^\prime) = 1.
\]
For each $\tau \geq 0$, $P(\tau,x,x^\prime)$ 
is an infinite matrix indexed by $x,x^\prime \in \integ^N$.

Throughout this paper we shall be concerned with infinite matrices indexed by 
$\integ^N$, i.e.\  functions $\psi:\integ^N \times \integ^N \rightarrow \real$. Such a matrix $\psi$ may be naturally regarded also as 
a linear operator $\psi$ from a subspace of $\real^{(\integ^N)}$ into 
$\real^{(\integ^N)}$ by the 
prescription that given $g \in \real^{(\integ^N)}$
we define $\psi \, g \in \real^{(\integ^N)}$ by the matrix vector multiplication (in reverse order)
\[
(\psi \, g)(y) = \sum_{x \in \integ^N} \psi(x,y) g(x),
\]
provided the sum converges absolutely.  
Given two operators (matrices) $\psi_1,\psi_2$ the ``product'' notation $\psi_1 \psi_2$ 
shall mean the composition $\psi_1 \circ \psi_2$ of operators which is also given by
the matrix multiplication in {\em reverse order}
\[
(\psi_1 \psi_2)(x,x') = \sum_{y \in \integ^N} \psi_1(y,x') \psi_2(x,y),
\]   
again when the sum above converges absolutely. 
Given such an operator $\psi$ we denote by $|\psi|$ the function $(x,x') \mapsto |\psi(x,x')|$ and like wise
given a function $g \in \real^{(\integ^N)}$  we denote by $|g|$ the function $x \mapsto |g(x)|$. 

Since there are only finitely many jumps out of each state, the time evolution of $P(\tau)$
satisfies the Kolmogorov's forward equation  
\begin{equation}
P^{(1)}(\tau,x,x^\prime) = \sum_{j=1}^M \left(P(\tau,x,x^\prime-\nu_j) a_j(x^\prime-\nu_j) - P(\tau,x,x^\prime) a_j(x^\prime) \right).
\label{eq_forward}
\end{equation}
Let us define $Q: \integ^N \times \integ^N  \rightarrow \real$ by
\begin{equation}
\begin{aligned}
Q(x,x') &= a_j(x), \quad \text{if } x'=x+\nu_j,\\
Q(x,x') &= -a_0(x), \quad \text{if } x'=x,\\
Q(x,x') &= 0, \quad \text{otherwise}.
\end{aligned}
\label{eq_Q}
\end{equation}
Thus we may write \eqref{eq_forward} as
\[
P^{(1)}(\tau,x,x') = \sum_{y \in \integ^N} Q(y,x') P(\tau,x,y),
\]
and this may be compactly written in operator notation as
\begin{equation}
P^{(1)}(\tau) = Q\, P(\tau).
\label{eq_forward_op}
\end{equation}
When regarded as an operator on $l_1(\integ^N;\real)$, $Q$ is known as the {\em generator}
of the semigroup $P(\tau)$. It must be noted that $Q$ is an unbounded operator and 
its domain is not all of $l_1(\integ^N;\real)$. The above operator equation holds
on the domain of both sides. 
Since the sum on the righthand side of \eqref{eq_forward} involves finitely many terms,
we may differentiate it arbitrary number of times. In operator notation we 
obtain that for $q \in \posint$,
\begin{equation}
P^{(q)}(\tau) = Q^q \, P(\tau).
\label{eq_forward_op_q}
\end{equation}
We note that $Q^q$ is well defined as a function on $\integ^N \times \integ^N$ 
or an infinite matrix since any given row or column of $Q$ has only 
finitely many nonzero entries and hence $q$-fold multiplication of $Q$ is well defined.  

Given a chemical process $X$ let $R(t) \in \posint^M$ denote the vector 
of reaction counts during $(0,t]$; in other words, for $j=1,\dots,M$, $R_j(t)$ is the number of times reaction channel $j$ fires during $(0,t]$.  
 If $X(t)=x$ then $X(t+\tau) = x + \nu (R(t+\tau)-R(t))$. 
For given $x$ and $\tau$, the conditional distribution (conditioned on
$X(t)=x$) of the random 
variable $R(t+\tau)-R(t)$ (which depends only on $x$ and $\tau$) is in general not known and hence it is difficult to generate a sample from.  
A tau leap method typically provides an approximation of the conditional distribution 
of $R(t+\tau)-R(t)$ given $X(t)=x$ by an easily computable random variable $K$
whose distribution depends on $x$ and $\tau$ and thus also provides an
approximation for the distribution of $X(t+\tau)$ by that of $x + \nu K$. 

In a very general sense, 
given (current) state $x \in \integ^N$ and a time step $\tau>0$ 
a {\em tau leap method} assigns an (approximate) probability mass function for 
the state $x'$ after elapsed time $\tau$. 
Thus we take the view point that a tau leap method is uniquely characterized by a map  
$\phi:[0,\infty) \times \integ^N \times \integ^N \rightarrow \real$ 
where $\phi(\tau,x,x^\prime)$ is the probability assigned to state $x^\prime$. 

We shall define a {\em mesh} $\Pi$ on $[0,T]$ to be a finite length sequence 
$\Pi=(t_0,\dots,t_n)$ that satisfies $0=t_0<t_1<\dots<t_{n-1}<t_n=T$. 
We shall define step sizes associated with $\Pi$ to be $\tau_j=t_j-t_{j-1}$
for $j=1,\dots,n$ and we shall denote the maximum step size 
$\max\{\tau_1,\dots,\tau_n\}$ by $|\Pi|$. 
Given a tau leap method $\phi$ and a mesh $\Pi=(t_0,\dots,t_n)$ on $[0,T]$ 
the {\em tau leap solution} $Y_\Pi(t)$ for $t \in [0,T]$ 
corresponding to initial condition $x_0 \in \integ^N$ is defined to be
the stochastic process which is constant on $[t_{j-1},t_j)$ for $j=1,\dots,n$
(thus jumps at $t_1,\dots,t_n$), satisfies $Y_\Pi(0)=x_0$ and also satisfies
\begin{equation}
\phi(\tau_j,x,x^\prime) = \text{Prob}\{Y_{\Pi}(t_j)=x^\prime | \, Y_\Pi(t_{j-1})=x \}, \quad j=1,\dots,n.
\label{eq_def_tau_process}
\end{equation}
Note that the tau leap solution $Y_{\Pi}(t)$ on any given mesh $\Pi$ is also 
a Markov process, but it is not time homogeneous since the family
$\phi(\tau)$ does not possess the semigroup property
with respect to the time parameter $\tau$.

We note that elements of $l_1(\integ^N;\real)$ may be regarded as signed finite measures on $\integ^N$ and denote by $\sP$ the set of all probability measures on $\integ^N$. 
We finally note that for each $\tau \geq 0$, the operators (or infinite matrices) 
$P(\tau)$ and $\phi(\tau)$ (which we call the {\em transition functions} of the process and the tau leap method respectively) have induced norm equal to $1$ (on $l_1(\integ^N;\real)$) and moreover
they leave $\sP$ invariant, i.e.\ map probabilities to probabilities. 

\subsection{Total variation, moment variation, spaces $\sM$ and $\sC$}
\label{sec_momvar}
In this section we define some spaces that shall play an important role 
in our convergence study. We remark up front that the spaces defined here are
weighted $l_1$ spaces and their duals. Related but different 
spaces (weighted $l_2$ and related discrete Sobolev spaces) were developed in \cite{Engblom-09}
for the spectral approximation of the solution of equation
\eqref{eq_forward}. 

First we recall the {\em total variation} norm. 
Given two signed finite measures $g_1$ and $g_2$ on $\integ^N$ 
the total variation between $g_1$ and $g_2$ is 
given by the $1$-norm distance 
\[
\|g_1-g_2\|_1 = \sum_{x \in \integ^N} |g_1(x)-g_2(x)|.
\]   

Throughout this paper we shall use $|.|$ to denote a norm on $\real^N$. 
For each $r \in \posint$ we shall define the $r$th {\em moment variation}
  $|.|_r$ 
on $l_1(\integ^N;\real)$ by
\begin{equation}
| g |_r = \sum_{x \in \integ^N} \frac{1}{2} \, (1+|x|^r) |g(x)| \leq \infty,
\label{eq_seminorm_r}
\end{equation}
for all $g \in l_1(\integ^N;\real)$.
We define the subspaces 
$\sM_r \subset l_1(\integ^N;\real)$ for $r \in \posint$ by
\begin{equation}
\sM_r = \{ g \in l_1(\integ^N;\real)\, | \;\; | g|_r < \infty \}
\label{eq_space_Mr}
\end{equation}
and $\sM$ by $\sM = \bigcap_{r \in \posint} \sM_r$.  It follows that $| .|_r$ 
is a norm on $\sM_r$ for each $r \in \nat$ and when $r=0$, $|.|_0$ is the total variation norm or equivalently the $1$-norm 
($\sM_0 = l_1(\integ^N;\real)$). We note that $\sM_r$ equipped with $|.|_r$
norm is a Banach space isometrically isomorphic to $l_1$, the space of summable
sequences. To see this let $\xi:\nat \to \integ^N$ be a bijection. 
Define $\eta:\sM_r \to l_1$ by 
\[
{\eta(g)}(n) = g(\xi(n)) (1+|\xi(n)|^r)/2.
\]
It is straightforward to verify that $\eta$ is an isometric isomorphism. 

It must also be noted that $\sM_r$ 
includes all probability measures which have a finite $r$th moment and
$\sM$ includes all probability measures that have finite moments of
all orders. 

\begin{remark}
Due to the equivalence of norms on $\real^N$, two different norms 
$|.|_r$ arising from two different norms on $\real^N$ are equivalent. 
%and hence so are the two metrics $d_r$ arising from these. 
\label{rem_equiv_norm}
\end{remark}

We state the following lemma which will be used frequently throughout this paper.

\begin{lemma}
For $0<r_1<r_2$ there exists $\alpha$ such that
\[
|g|_{r_1} \leq \alpha |g|_{r_2},
\]
for all $g \in \sM$.
\label{lem_r1lessr2}
\end{lemma}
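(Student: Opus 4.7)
The plan is to reduce the claim to a pointwise inequality between the weight functions $w_{r_1}(x) = \tfrac{1}{2}(1+|x|^{r_1})$ and $w_{r_2}(x) = \tfrac{1}{2}(1+|x|^{r_2})$ that appear in the definition \eqref{eq_seminorm_r} of the moment variations. Indeed, if we can find $\alpha$ with $1+|x|^{r_1} \leq \alpha(1+|x|^{r_2})$ for every $x \in \integ^N$, then multiplying by $\tfrac{1}{2}|g(x)|$ and summing over $x \in \integ^N$ gives exactly $|g|_{r_1} \leq \alpha\, |g|_{r_2}$, with no further issues about convergence since every term is nonnegative.

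So I would focus on the scalar inequality $1+|x|^{r_1} \leq \alpha(1+|x|^{r_2})$. The idea is to split into two cases based on whether $|x| \leq 1$ or $|x| > 1$. When $|x| \leq 1$, we have $|x|^{r_1} \leq 1$ (using $r_1 > 0$), so $1+|x|^{r_1} \leq 2 \leq 2(1+|x|^{r_2})$. When $|x| > 1$, since $r_1 < r_2$ we have $|x|^{r_1} \leq |x|^{r_2}$, hence $1+|x|^{r_1} \leq 1+|x|^{r_2} \leq 2(1+|x|^{r_2})$. Combining both cases, $\alpha = 2$ works.

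There is really no serious obstacle here; the only small subtlety is that $|x|$ refers to the fixed norm on $\real^N$ chosen in the paper, but by Remark \ref{rem_equiv_norm} the statement is insensitive to the choice of norm on $\real^N$, so there is no loss of generality in using case analysis on the value of $|x|$. One could alternatively invoke Young's inequality to get a cleaner single-line bound, but the two-case argument already yields the explicit constant $\alpha = 2$, which is more than enough for all subsequent applications where this lemma is needed to control lower-order moment variations by higher-order ones.
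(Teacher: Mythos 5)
Your proof is correct and follows essentially the same route as the paper: both arguments reduce the claim to a pointwise inequality between the weights $1+|x|^{r_1}$ and $1+|x|^{r_2}$ and then sum against $\tfrac{1}{2}|g(x)|$. The only difference is cosmetic — the paper justifies the pointwise bound by noting that only finitely many lattice points satisfy $|x|<1$, whereas your two-case argument yields the explicit constant $\alpha=2$ and does not use discreteness of $\integ^N$ at all.
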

\begin{proof}
The set of $x \in \integ^N$ for which $|x|<1$ is finite.
Thus there exists $\alpha$ such that $|x|^{r_1} \leq \alpha |x|^{r_2}$ for all $x \in \integ^N$.
\end{proof}

\begin{corollary} For $r \in \posint$, 
$\sM_{r+1} \subset \sM_r$.
\end{corollary}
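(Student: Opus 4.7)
The plan is to apply Lemma \ref{lem_r1lessr2} with $r_1 = r$ and $r_2 = r+1$. Take an arbitrary $g \in \sM_{r+1}$, so that $|g|_{r+1} < \infty$ by the definition \eqref{eq_space_Mr}. I want to exhibit a constant $\alpha$, independent of $g$, such that $|g|_r \leq \alpha |g|_{r+1}$, whence $|g|_r < \infty$ and $g \in \sM_r$.

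The key ingredient is a pointwise bound $(1+|x|^r)/2 \leq \alpha (1+|x|^{r+1})/2$ valid for every $x \in \integ^N$. This is immediate: only finitely many $x \in \integ^N$ satisfy $|x| < 1$, so on that finite set one bounds $1+|x|^r$ by some constant times $1+|x|^{r+1}$, while on the complement $|x| \geq 1$ forces $|x|^r \leq |x|^{r+1}$, making the same bound easy. This is exactly the argument appearing in the proof of Lemma \ref{lem_r1lessr2}, so strictly speaking the lemma as stated already gives what is needed, except that it is phrased for $g \in \sM$ rather than for $g \in l_1(\integ^N;\real)$; since the pointwise bound holds for every $x$, multiplying by $|g(x)|$ and summing over $\integ^N$ gives the normed inequality for every $g \in l_1(\integ^N;\real)$ (both sides possibly $+\infty$).

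Applying the resulting inequality $|g|_r \leq \alpha |g|_{r+1}$ to our chosen $g \in \sM_{r+1}$ yields $|g|_r \leq \alpha |g|_{r+1} < \infty$, which by definition places $g$ in $\sM_r$. Since $g \in \sM_{r+1}$ was arbitrary, $\sM_{r+1} \subset \sM_r$, proving the corollary.

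There is really no obstacle here; the only minor technicality is that Lemma \ref{lem_r1lessr2} is stated for the intersection $\sM$, whereas the corollary asks about $g \in \sM_{r+1}$. The resolution is to observe that the lemma's proof produces a pointwise estimate on the weights, which upgrades the conclusion to all of $l_1(\integ^N;\real)$ for free.
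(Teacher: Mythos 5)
Your proof is correct and is essentially the paper's intended argument: the corollary is stated as an immediate consequence of Lemma \ref{lem_r1lessr2} with $r_1=r$, $r_2=r+1$, and your observation that the pointwise weight bound upgrades the lemma's inequality from $g\in\sM$ to arbitrary $g$ (with both sides possibly infinite) is exactly the right way to close the small gap in how the lemma is phrased. The only case you leave implicit is $r=0$, where the lemma's hypothesis $0<r_1$ fails, but there the containment is trivial since every $\sM_{r+1}$ is by definition a subspace of $l_1(\integ^N;\real)=\sM_0$.
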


%\subsection{Lattice functions of polynomial growth: classes $\sC_r$ and $\sC$}
%\label{sec_classC}
The main convergence results in this paper are obtained under the assumption 
that the propensity functions are at most of polynomial growth. 
We define classes $\sC_r$ and $\sC$ to make this concept precise and prove 
some important results concerning the generator $Q$ 
under the polynomial growth assumption on propensities. In particular we 
show that under polynomial growth assumption on propensities, 
$Q$ maps $\sM$ into $\sM$. 

For each $r \in \posint$ the class $\sC_r$ of 
functions $f:\integ^N \rightarrow \real$ that are said to be of {\em polynomial growth} of degree $r$ 
are defined by the condition that $f \in \sC_r$ if and only if there exists $\alpha>0$ such that 
\[
|f(x)| \leq \alpha (|x|^r + 1), \quad \forall x \in \integ^N.
\]
We define the class $\sC$ by $\sC = \cup_{r \in \posint} \sC_r$.

It is easy to see that for each $r \in \posint$, $\sC_r$  
is a Banach space when equipped with the norm that is given by 
\[
\|f\| = \sup\{ 2 f(x)/(1+|x|^r) \, | \, x \in \integ^N \},
\] 
for $f \in \sC_r$. Moreover, $\sC_r$ can be naturally identified with the 
dual $\sM_r^*$ of $\sM_r$ with the pairing given by 
\[
\langle f, g \rangle = \sum_{x \in \integ^N} f(x) g(x),
\]
where $f \in \sC_r$ and $g \in \sM_r$. 

\begin{lemma}
Suppose $f: \integ^N \rightarrow \real$ is given by a polynomial in $|x|$ 
of degree $r$. 
Then $f \in \sC_r$.
\label{lem_classC_poly}
\end{lemma}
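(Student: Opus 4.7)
The statement is essentially immediate from the definition of $\sC_r$, so my plan is simply to make the needed estimate explicit. First, I would write $f(x) = \sum_{k=0}^{r} c_k |x|^k$ for some real constants $c_0,\dots,c_r$. The goal is then to produce a single $\alpha>0$ such that $|f(x)| \le \alpha(1+|x|^r)$ for every $x \in \integ^N$, which I will obtain by controlling each monomial $|x|^k$ separately.

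The key elementary observation is that for any real $t \ge 0$ and any integer $0 \le k \le r$, one has $t^k \le 1 + t^r$: if $t \le 1$ then $t^k \le 1$, and if $t > 1$ then $t^k \le t^r$. Applying this with $t = |x|$ gives $|x|^k \le 1 + |x|^r$ for all $x \in \integ^N$ and all $0 \le k \le r$.

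Combining this monomial bound with the triangle inequality yields
\[
|f(x)| \le \sum_{k=0}^{r} |c_k|\, |x|^k \le \Bigl(\sum_{k=0}^{r}|c_k|\Bigr) (1+|x|^r),
\]
so the lemma holds with $\alpha = \sum_{k=0}^{r} |c_k|$. There is no real obstacle here; once the elementary inequality $t^k \le 1+t^r$ is in place, membership in $\sC_r$ follows directly from the definition. (Note that by Remark~\ref{rem_equiv_norm} the particular choice of norm $|\cdot|$ on $\real^N$ is immaterial, since any two such norms give equivalent $\sC_r$-norms.)
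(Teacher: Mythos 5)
Your proof is correct and follows essentially the same route as the paper's: both reduce the claim to bounding each monomial $|x|^k$ ($0 \le k \le r$) by a multiple of $1+|x|^r$ and then summing over the coefficients. Your justification of the monomial bound via the scalar inequality $t^k \le 1+t^r$ for all real $t \ge 0$ is in fact slightly more direct than the paper's, which instead invokes the finiteness of the set $\{x \in \integ^N : |x| < 1\}$; your version needs no lattice structure and yields the explicit constant $\alpha = \sum_k |c_k|$.
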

\begin{proof}
We note that if $0 \leq r_1 < r_2$  
then there exists $\alpha >0$ such that $|x|^{r_1} \leq \alpha (|x|^{r_2} +1)$
for all $x \in \integ^N$.
This follows because the 
set of $x \in \integ^N$ such that $|x| < 1$ is finite regardless of the
norm used. 
\end{proof}
 
The following corollary is immediate.
\begin{corollary}
A (multivariate) polynomial $f:\integ^N \rightarrow \real$ belongs to $\sC$.
\label{cor_classC_poly}
\end{corollary}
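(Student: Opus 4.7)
The plan is to reduce the multivariate case to the single-variable case already handled by Lemma \ref{lem_classC_poly}. First I would observe that $\sC_r$ is manifestly a vector space (it is closed under scalar multiples and finite sums, since the defining inequality $|f(x)| \leq \alpha(|x|^r+1)$ adds), and that $\sC_r \subset \sC_{r'}$ whenever $r \leq r'$, by the same elementary argument used inside the proof of Lemma \ref{lem_classC_poly} (the set $\{x \in \integ^N : |x|<1\}$ is finite, so $|x|^r \leq \beta(|x|^{r'}+1)$ for a suitable $\beta>0$). Consequently it suffices to prove that every monomial $m(x)=x_1^{k_1}\cdots x_N^{k_N}$ of total degree $k=k_1+\cdots+k_N$ lies in $\sC_k$.

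Next I would exploit the freedom of norm on $\real^N$ granted by Remark \ref{rem_equiv_norm}: pick $|x|$ to be the max-norm $|x|_\infty = \max_i |x_i|$. For this choice, each coordinate satisfies $|x_i| \leq |x|_\infty$, so $|m(x)| \leq |x|_\infty^{k_1}\cdots|x|_\infty^{k_N} = |x|_\infty^k$. Hence $m$ satisfies the polynomial-growth bound with $\alpha=1$ with respect to the max-norm, i.e.\ $m \in \sC_k$ in that norm. By Remark \ref{rem_equiv_norm}, membership in $\sC_k$ is independent of the chosen norm on $\real^N$, so $m \in \sC_k$ for any norm.

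Finally, writing an arbitrary multivariate polynomial $f$ as a finite linear combination $f = \sum_\ell c_\ell\, m_\ell$ of monomials $m_\ell$ of total degrees $k_\ell \leq r := \deg f$, I would conclude by the vector-space property and the containment $\sC_{k_\ell} \subset \sC_r$ that $f \in \sC_r \subset \sC$. There is no genuine obstacle here; the only thing to be slightly careful about is the norm-dependence issue, which is handled cleanly by selecting $|\cdot|_\infty$ and invoking Remark \ref{rem_equiv_norm}. Alternatively, one could directly invoke Lemma \ref{lem_classC_poly} by noting $|m(x)| \leq |x|_\infty^k$ is itself a (degree $k$) polynomial in $|x|_\infty$, thereby reusing the lemma verbatim; I would likely present the short linearity argument above as it is the most transparent.
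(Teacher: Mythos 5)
Your argument is correct and amounts to the same reasoning the paper leaves implicit when it calls the corollary ``immediate'' from Lemma \ref{lem_classC_poly}: bound the polynomial by (a polynomial in) a power of the norm and use that $\sC_r$ is a vector space with $\sC_{r_1}\subset\sC_{r_2}$ for $r_1\le r_2$. One small citation quibble: Remark \ref{rem_equiv_norm} concerns the moment-variation norms $|\cdot|_r$ on measures, whereas the norm-independence of the classes $\sC_r$ is the separate observation the paper makes just after the corollary (it of course also follows from equivalence of norms on $\real^N$), so your use of it is harmless but slightly misattributed.
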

Also note that the definitions of $\sC_r$ and $\sC$ are independent 
of the norm used in $\real^N$. 

%The following lemma is very useful and
%the proof is omitted. 
%\begin{lemma}
%The class $\sC$ is closed under addition, multiplication and 
%domain shifts. To be precise, for all $f,g \in \sC$ and $y \in \integ^N$
%it follows that $f+g \in \sC$, $f \, g \in \sC$ and $h \in \sC$, where
%$h$ is defined by $h(x)=f(x-y)$ for all $x \in \integ^N$. 
%\label{lem_closure_C}
%\end{lemma}

The following lemma plays an important role in our convergence analysis.
\begin{lemma} 
Let $Q$ as defined in \eqref{eq_Q} correspond to a 
chemical system whose propensity functions are of class $\sC_{s}$ for some $s \in \posint$.
Then for each $r \in \posint$, 
there exists $B_r>0$ such that 
\[
| Q\, g |_r = \big{|} |Q\, g| \big{|}_r \leq \big{|} |Q| \, |g| \big{|}_r \leq B_r | g|_{s+r},
\]
for each  $g \in \sM$. 
Hence $Q\, \sM \subset \sM$ and $|Q| \, \sM \subset \sM$.  In particular the domain of the generator $Q$ contains $\sM$. (See Section \ref{sec-process-tau} for definition of absolute value $|Q|$ ). 
\label{lem_QM_M}
\end{lemma}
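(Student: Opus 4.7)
The plan is to work directly from the explicit form of $Q$ in \eqref{eq_Q}. Since the only non-zero entries of $Q$ in row $x$ are $-a_0(x)$ at column $x$ and $a_j(x)$ at columns $x+\nu_j$, the matrix-vector product in reverse order gives, for each $y\in\integ^N$,
\[
(Q\,g)(y) = -a_0(y)\,g(y) + \sum_{j=1}^M a_j(y-\nu_j)\,g(y-\nu_j),
\]
and the same identity with absolute values and plus signs holds for $(|Q|\,|g|)(y)$. The first inequality $|Q\,g|(y)\le (|Q|\,|g|)(y)$ is then immediate from the triangle inequality in $\real$, and it passes to the weighted sum giving $|Q\,g|_r\le \big||Q|\,|g|\big|_r$.

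For the main estimate, I would split $\big||Q|\,|g|\big|_r$ into the diagonal contribution and one term per reaction channel. The diagonal term is
\[
\sum_{y\in\integ^N}\tfrac{1}{2}(1+|y|^r)\,a_0(y)\,|g(y)|.
\]
Because each $a_j$ is of class $\sC_s$, so is $a_0$, so there is $\alpha>0$ with $a_0(y)\le\alpha(1+|y|^s)$, and the elementary inequality $(1+|y|^r)(1+|y|^s)\le 2(1+|y|^{r+s})$ bounds this contribution by a constant multiple of $|g|_{r+s}$.

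For the $j$-th off-diagonal term, I would change variables $x=y-\nu_j$ to obtain
\[
\sum_{x\in\integ^N}\tfrac{1}{2}(1+|x+\nu_j|^r)\,a_j(x)\,|g(x)|.
\]
The key step---and the only mildly delicate point---is to absorb the translated weight. The triangle inequality on $\real^N$ and the standard bound $(a+b)^r\le 2^{r-1}(a^r+b^r)$ (for $r\ge 1$; for $r=0$ this is trivial) give $|x+\nu_j|^r\le 2^{r-1}(|x|^r+|\nu_j|^r)$, so there is a constant $C_{r,j}$ with $1+|x+\nu_j|^r\le C_{r,j}(1+|x|^r)$. Combined with the polynomial bound on $a_j$, the $j$-th term is bounded by a constant multiple of $|g|_{r+s}$.

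Summing the $M+1$ contributions yields a constant $B_r$ with $\big||Q|\,|g|\big|_r\le B_r\,|g|_{r+s}$. For $g\in\sM$ we have $|g|_{r+s}<\infty$ for every $r$, so $Q\,g$ and $|Q|\,|g|$ lie in every $\sM_r$ and hence in $\sM$. In particular the $r=0$ case gives $\|Q\,g\|_1\le B_0\,|g|_s<\infty$, so $g$ lies in the $l_1(\integ^N;\real)$-domain of the generator $Q$. The only obstacle worth flagging is the translation-of-weight step; everything else is bookkeeping with polynomial growth bounds.
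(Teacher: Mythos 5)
Your proposal is correct and follows essentially the same route as the paper's proof: split $\big||Q|\,|g|\big|_r$ into the diagonal term involving $a_0$ and the $M$ reindexed off-diagonal terms, absorb the translated weight via $1+|x+\nu_j|^r\le C(1+|x|^r)$, and combine with the polynomial bound on the propensities to land in a constant multiple of $|g|_{r+s}$. The only difference is presentational (you make the change of variables and the product bound $(1+|x|^r)(1+|x|^s)\le 2(1+|x|^{r+s})$ explicit where the paper cites Lemma \ref{lem_classC_poly}), so nothing further is needed.
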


\begin{proof}
\[
\begin{aligned}
\big{|} |Q| \,  |g| \big{|}_r &= \frac{1}{2} \, \sum_{x' \in \integ^N}  (1+|x'|^r) \Big|\sum_{x \in \integ^N}|Q(x,x')| |g(x)| \Big|    \\
&= \frac{1}{2} \, \sum_{x \in \integ^N}  \sum_{j=1}^M (1+ |x+\nu_j|^r) a_j(x) |g(x)| + \frac{1}{2} \, \sum_{ x \in \integ^N}  (1+|x'|^r) a_0(x) |g(x)|, \\
\end{aligned}
\]
where we have used \eqref{eq_Q}. 
Since $a_j$ are of class $\sC_s$, there exists $\alpha$ 
independent of $x$ such that
\[
a_j(x) \leq a_0(x) \leq \alpha (|x|^s + 1),
\]
for all $x$. Additionally we have 
\[
1 + |x + \nu_j|^r \leq 1 + (|x|+|\nu_j|)^r \leq 2^r (|x|^r + |\nu_j|^r) + 1 \leq \beta (|x|^r + 1),
\]
for some $\beta$ independent of $x$. 
Thus we obtain that for some constants $\tilde{B}_r$ and $B_r$ the following holds for all $g$:
\[
\begin{aligned}
\big{|} |Q| \,  |g| \big{|}_r &\leq  \frac{1}{2} \,  \sum_{x \in \integ^N} \tilde{B}_r (|x|^s + 1) (|x|^r + 1) |g(x)|,\\
&\leq B_r \, |g|_{r+s}
\end{aligned}
\]
Note that we have used Lemma \ref{lem_classC_poly}.
\end{proof}

Finally we provide a lemma which shows that convergence in the moment
variation norm $|.|$ is equivalent to convergence of $E(f(X_n))$ to $E(f(X))$ 
for all $f \in \sC_r$. 

\begin{lemma}\label{lem-momvar-mom-conv}
 For $n \in \nat$, let $p_n, p \in \sM_r$ be probability measures. 
The following are equivalent:
\begin{enumerate}
\item $\lim_{n \to \infty} |p_n-p|_r =0$.
\item For every function $f:\integ^N \to \real$ that is of class $\sC_r$ 
we have
\[
\sum_{x \in \integ^N} f(x) p_n(x) \to \sum_{x \in \integ^N} f(x) p(x).
\]
\end{enumerate}
\end{lemma}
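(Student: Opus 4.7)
The plan is to prove the two directions separately, with the forward direction being essentially a one-line duality estimate and the reverse direction requiring a Scheff\'e-type truncation argument that exploits nonnegativity of the probability measures.

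For (1) $\Rightarrow$ (2), I would let $w(x)=\tfrac{1}{2}(1+|x|^r)$ so that $|g|_r = \sum_{x} w(x)|g(x)|$. Given $f \in \sC_r$, the polynomial growth bound yields $\alpha>0$ with $|f(x)| \le 2\alpha w(x)$. Then
\[
\Big| \sum_{x \in \integ^N} f(x)(p_n(x)-p(x)) \Big| \;\le\; 2\alpha \sum_{x \in \integ^N} w(x)|p_n(x)-p(x)| \;=\; 2\alpha |p_n-p|_r,
\]
and the right side tends to zero by (1).

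For (2) $\Rightarrow$ (1), the strategy is a truncation argument in three ingredients. First, taking $f = \mathbf{1}_{\{y\}}$ (bounded, hence in $\sC_r$) for each $y \in \integ^N$ gives pointwise convergence $p_n(y) \to p(y)$. Second, since $w \in \sC_r$ (Lemma \ref{lem_classC_poly}) and since $p_n, p \ge 0$, taking $f = w$ in (2) yields $|p_n|_r \to |p|_r$; this is where the nonnegativity of probability measures is essential. Third, for any \emph{finite} $F \subset \integ^N$, the function $w\,\mathbf{1}_F$ is bounded and hence in $\sC_r$, so $\sum_{x \in F} w(x) p_n(x) \to \sum_{x \in F} w(x) p(x)$; subtracting from the previous limit gives the tail convergence $\sum_{x \notin F} w(x) p_n(x) \to \sum_{x \notin F} w(x) p(x)$.

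With these ingredients assembled, I would fix $\epsilon>0$ and split
\[
|p_n-p|_r \;=\; \sum_{x \in F} w(x)|p_n(x)-p(x)| \;+\; \sum_{x \notin F} w(x)|p_n(x)-p(x)|.
\]
Since $p \in \sM_r$, choose $F$ finite with $\sum_{x \notin F} w(x)p(x) < \epsilon/4$. The tail convergence then gives, for $n$ large, $\sum_{x \notin F} w(x) p_n(x) < \epsilon/2$, so the triangle inequality bounds the second sum by $3\epsilon/4$. For the first (finite) sum, pointwise convergence of $p_n$ makes it smaller than $\epsilon/4$ for $n$ sufficiently large, giving $|p_n-p|_r < \epsilon$ as required.

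The main obstacle is the reverse direction: test functions in $\sC_r$ do not directly control the signed-measure quantity $|p_n-p|_r$ in the way an indicator of the sign would, and the ``right'' choice of test function would be $n$-dependent. The workaround, and the key conceptual point, is that the weight $w$ itself lies in $\sC_r$, so convergence of the $r$th weighted mass is available for free; combined with pointwise convergence on singletons, this is enough to rule out escape of mass at infinity and thereby upgrade weak-type convergence to convergence in the moment variation norm.
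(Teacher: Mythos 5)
Your proof is correct, but it takes a genuinely different route from the paper. The paper observes that statement (1) is norm convergence in $\sM_r$ and statement (2) is weak convergence (using the identification $\sC_r \cong \sM_r^*$), and then invokes the Schur property of $l_1$ together with the isometric isomorphism $\sM_r \cong l_1$; the whole proof is three lines but leans on a nontrivial functional-analytic fact. You instead give a self-contained Scheff\'e-type argument: the forward direction is the same duality estimate in both cases, but your reverse direction replaces the Schur property by the observation that the weight $w$ itself is an admissible test function, so that testing against $w$, against $w\,\mathbf{1}_F$, and against singletons yields convergence of the total weighted mass, tail control, and pointwise convergence, which together force norm convergence. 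The trade-off is precisely where you flag it: your argument is elementary and transparent but essentially uses nonnegativity of $p_n$ and $p$ (for signed elements of $\sM_r$, convergence of $\sum_x w(x)g_n(x)$ does not control $\sum_x w(x)|g_n(x)|$, and one would genuinely need the Schur property), whereas the paper's argument applies verbatim to arbitrary weakly convergent sequences in $\sM_r$. Since the lemma is stated only for probability measures, your proof fully establishes the claim, and arguably makes the mechanism (no escape of mass to infinity) more visible than the black-box appeal to Schur.
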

\begin{proof}
We note that the first statement is that of strong convergence of $p_n$ to $p$
in $\sM_r$ (equipped with $|.|_r$) and the second is that of weak convergence 
of $p_n$ to $p$ in $\sM_r$. Since $l_1$ possesses the {\em Schur property} 
which states that ``a weakly convergent sequence is also strongly convergent''\cite{conway-book}, 
and $\sM_r$ is isometrically isomorphic to $l_1$, the result follows.
\end{proof}

\section{Convergence analysis}
Given the same initial condition $p_0 \in \sM \cap \sP$ (an initial probability 
measure on $\integ^N$ with finite moments of all orders) and 
a mesh $\Pi=(t_0,\dots,t_n)$ on $[0,T]$, let the $p(t)$ and $\hat{p}_{\Pi}(t)$
describe the probability mass functions of the chemical process $X(t)$
and its tau leap approximation $Y_\Pi(t)$ both of which satisfy
$p(0)=\hat{p}(0)=p_0$. 
We shall prove the convergence of $\hat{p}_{\Pi}(t)$ to $p(t)$ for $t=t_i$ 
in the $r$th moment variation norm
%(the $d_r$ metric) 
under suitable assumptions.  
In this section $P(\tau)$ stands for the transition function of the 
chemical process, $\phi(\tau)$ stands for the 
transition function of the tau leap method as defined in Section \ref{sec-process-tau}.
In what follows we shall use $\hat{p}(t)$ suppressing the subscript $\Pi$ 
for brevity.

We state a few assumptions about the chemical process $X(t)$ 
and its tau leap approximation that may be needed in the convergence results presented in this section. We note that Section 4 addresses the question of verification of these assumptions. Assumption 1 holds in all  
stochastic chemical models we have encountered in the literature 
and results in \cite{rathinam-QAM-14, Gupta-Briat+PLOS14, Engblom-14} provide conditions under which Assumption 2 holds and 
Theorem \ref{thm_ass2} of Section 4 restates a special case of a result proved in 
\cite{rathinam-QAM-14} regarding Assumption 2. Theorems \ref{thm_Ass5}, \ref{thm_Ass3} and \ref{thm_ass6} of Section 4 provide some general 
conditions under which Assumptions 5, 3 and 6 hold respectively and Theorem \ref{thm2_ass5}, Corollary \ref{cor_ass3} and Theorem \ref{thm2_ass6} provide more 
specific conditions for tau leap methods where reaction counts are approximated by (conditioned on current state) independent Poisson and/or binomial random variables. 

{\bf Assumption 1: Polynomial growth bound on propensities} All propensity functions of the chemical process are
in class $\sC_{s^*}$ for some $s^* \geq 0$.

{\bf Assumption 2: Exponential moment growth bound for $P$.} 
For all $r \in \posint$ there exist $\lambda_r>0$ 
such that for all $\tau>0$ and all $x \in \integ^N$ the following holds:
\begin{equation}
\sum_{x^\prime \in \integ^N} (1+|x^\prime|^r) \, P(\tau,x,x^\prime) \leq (1 + |x|^r)\, e^{\lambda_r \tau}. 
\label{eq_mom_bound_P}
\end{equation}
We may state \eqref{eq_mom_bound_P} equivalently as
\begin{equation}
|P(\tau) g|_r \leq |g|_r \, e^{\lambda_r \tau}, \quad \forall g \in \sM 
\label{eq_mom_bound_P_norm}
\end{equation}

Yet another equivalent way to state Assumption 2 is
\begin{equation}
E(1 + |X(t+\tau)|^r \, | X(t) = x) \leq (1 + |x|^r )\, e^{\lambda_r \tau}.
\label{eq_equiv_Ass2}
\end{equation}

{\bf Assumption 3: Pointwise consistency of order $q$.} 
For each $x \in \integ^N$ 
and 
$x^\prime \in \integ^N$,  
$\phi(\tau,x,x')$ is $q+1$ times
continuously differentiable in $\tau$ and the following hold:
\begin{equation}
\phi^{(i)}(0,x,x') = P^{(i)}(0,x,x'), \quad i=1,\dots,q.
\label{eq_ptwse_cons}
\end{equation}

Note that it follows from the finite sum on the right hand side of 
the Kolmogorov's forward equations \eqref{eq_forward} that 
$P$ is infinitely differentiable in $\tau$, 
so we do not need the differentiability assumption for $P$.

{\bf Assumption 4: Derivative bound on $\phi$ in total variation norm.} 
There exist $H_0>0$, $s_0 >0$, $\delta_0>0$ and $\gamma_0>0$ such that 
for all $\tau \in [0,\delta_0)$
\begin{equation}
\sum_{x' \in \integ^N} |\phi^{(q+1)}(\tau,x,x')| \leq \frac{H_0}{2} ( |x|^{s_0} + 1) \, e^{\gamma_0 \tau},
\label{eq_deriv_bd_phi}
\end{equation}
where $q$ is as in Assumption 3. 
Equation \eqref{eq_deriv_bd_phi} may be equivalently stated as
\begin{equation}
\big{|} |\phi^{(q+1)}(\tau)| g\big{|}_0 \leq H_0 \, |g|_{s_0} \, e^{\gamma_0 \tau}, \quad \forall g \in \sM.
\label{eq_deriv_bd_phi_norm}
\end{equation}

{\bf Assumption 5: Derivative bound on $\phi$ in moment variation norms.} 
For each $r \in \posint$ there exist $H_r>0$, $s_r >0$, $\delta_r>0$ and $\gamma_r>0$ such that 
for all $\tau \in [0,\delta_r)$
\begin{equation}
\sum_{x' \in \integ^N} (1 + |x'|^r) \, |\phi^{(q+1)}(\tau,x,x')| \leq  H_r ( |x|^{s_r} + 1) \, e^{\gamma_r \tau},
\label{eq_mom_deriv_bd_phi}
\end{equation}
where $q$ is as in Assumption 3. 
Equation \eqref{eq_mom_deriv_bd_phi} may be equivalently stated as
\begin{equation}
\big{|}|\phi^{(q+1)}(\tau)| g\big{|}_r \leq H_r \, |g|_{s_r}  \, e^{\gamma_r \tau}, \quad \forall g \in \sM.
\label{eq_mom_deriv_bd_phi_norm}
\end{equation}

Note that Assumption 5 implies Assumption 4. 

{\bf Assumption 6: Exponential moment growth bound for $\phi$.} 
For each $r \in \posint$ there exist $\lambda_r>0$ and $\delta_r>0$ 
%and a function $F_r \in \sD$ 
such that for all $\tau \in [0,\delta_r)$ and all $x \in \integ^N$ the following holds:
\begin{equation}
\sum_{x^\prime \in \integ^N} (1+|x^\prime|^r) \, \phi(\tau,x,x^\prime) \leq (1 + |x|^r)\, e^{\lambda_r \tau}. 
\label{eq_mom_bound_phi}
\end{equation}
We may state \eqref{eq_mom_bound_phi} equivalently as
\begin{equation}
|\phi(\tau) g|_r \leq |g|_r \, e^{\lambda_r \tau}, \quad \forall g \in \sM 
\label{eq_mom_bound_phi_norm}
\end{equation}
Note that for convenience we have chosen without loss of generality $\lambda_r$ %and $F_r$ to 
to be the same as in \eqref{eq_mom_bound_P} of Assumption 2.

\begin{remark}
We note that when these assumptions are used, it is assumed that there exists 
a common norm on $\real^N$ such 
that Assumptions 1 through 6 hold (in that same norm). 
Assumptions 1 and 3 are independent of the norm used on $\real^N$. 
Under suitable sufficient conditions Assumption 2 may be shown to hold in 
any norm on $\real^N$ with constants $\lambda_r$ depending on the norm  
\cite{rathinam-QAM-14}.  It is straight forward to show that 
Assumptions 4 and 5 are independent of the norm as long as norm dependent constants $H_r$ 
are allowed.    
\label{rem_Ass_norm_indep}
\end{remark}

\begin{remark}
If deterministic initial condition is assumed then convergence results can be
obtained under slightly relaxed versions of the above assumptions. For
instance in Assumption 2 the constant $\lambda_r$ will be required to be
independent of $x$ only within the set of states reachable from the initial
condition and not independent of all $x \in \posint^N$. We shall not pursue
this line of inquiry for sake of brevity.  
\label{rem_Ass_det_IC}
\end{remark}

An equation similar to \eqref{eq_mom_deriv_bd_phi} 
follows for $P(\tau)$ under Assumptions 1 and 2,  
which we state as a lemma.
\begin{lemma}
\label{lem_mom_deriv_bd_P}
For each $r \in \posint$ there exist $H_r>0$, $s_r >0$ and $\gamma_r>0$ such that 
for all $\tau>0$,
\begin{equation}
\sum_{x' \in \integ^N} (1 + |x'|^r) \, |P^{(q+1)}(\tau,x,x')| \leq H_r ( |x|^{s_r} + 1) \, e^{\gamma_r \tau}
\label{eq_mom_deriv_bd_P}
\end{equation}
which may be equivalently stated as
\begin{equation}
\big{|}|P^{(q+1)}(\tau)| g\big{|}_r \leq H_r \, |g|_{s_r}  \, e^{\gamma_r \tau}, \quad \forall g \in \sM.
\label{eq_mom_deriv_bd_P_norm}
\end{equation}
\end{lemma}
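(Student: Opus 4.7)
The strategy is to combine the operator identity \eqref{eq_forward_op_q}, namely $P^{(q+1)}(\tau) = Q^{q+1} P(\tau)$, with the nonnegativity of $P(\tau)$ and Lemma \ref{lem_QM_M} iterated $q+1$ times. The role of Assumption 1 is to feed the polynomial growth of the propensities into Lemma \ref{lem_QM_M}; the role of Assumption 2 is to absorb the $P(\tau)$ factor at the last step and produce the exponential factor in $\tau$.

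First I would establish an entrywise bound on the infinite matrix $P^{(q+1)}(\tau)$. Expanding the composition
\[
P^{(q+1)}(\tau,x,x') = \sum_{y_0,\dots,y_q} Q(y_q,x') Q(y_{q-1},y_q) \cdots Q(y_0,y_1)\, P(\tau,x,y_0),
\]
the triangle inequality together with $P(\tau,x,y_0)\geq 0$ gives the pointwise inequality
\[
|P^{(q+1)}(\tau,x,x')| \leq \bigl(|Q|^{q+1} P(\tau)\bigr)(x,x'),
\]
where $|Q|^{q+1}$ is the iterated composition of the nonnegative infinite matrix $|Q|$. Finiteness/absolute convergence of the sums is guaranteed because each row and each column of $Q$ has only finitely many nonzero entries (so $Q^{q+1}$ and $|Q|^{q+1}$ are well-defined) and because the arguments we apply them to will lie in $\sM$.

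Second, I would iterate Lemma \ref{lem_QM_M}: it says $\bigl||Q|\, h\bigr|_r \leq B_r |h|_{s^*+r}$ for all $h \in \sM$ (where $s^*$ is from Assumption 1). Replacing $r$ by $r+i s^*$ and composing $q+1$ times gives
\[
\bigl||Q|^{q+1} h\bigr|_r \leq C_r\, |h|_{r+(q+1)s^*}, \qquad C_r = \prod_{i=0}^{q} B_{r+i s^*}.
\]
Now set $h = P(\tau)|g|$, which is a nonnegative element of $\sM$: by Assumption 2 in the form \eqref{eq_mom_bound_P_norm} applied at order $r+(q+1)s^*$,
\[
\bigl|P(\tau)|g|\bigr|_{r+(q+1)s^*} \leq |g|_{r+(q+1)s^*}\, e^{\lambda_{r+(q+1)s^*}\tau}.
\]

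Chaining the three bounds yields
\[
\bigl||P^{(q+1)}(\tau)|\,g\bigr|_r \leq \bigl||P^{(q+1)}(\tau)|\,|g|\bigr|_r \leq \bigl||Q|^{q+1} P(\tau)|g|\bigr|_r \leq C_r\, e^{\lambda_{r+(q+1)s^*}\tau}\, |g|_{r+(q+1)s^*},
\]
which is \eqref{eq_mom_deriv_bd_P_norm} with the choices $H_r = C_r$, $s_r = r+(q+1)s^*$, $\gamma_r = \lambda_{r+(q+1)s^*}$. The rowwise formulation \eqref{eq_mom_deriv_bd_P} follows by taking $g$ to be the Dirac mass $\delta_x$. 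I do not foresee any real obstacle beyond bookkeeping; the only delicate point is justifying the reordering of the multiple sums needed to pass from the composition formula to the entrywise domination, but this is automatic because $Q$ has finite support in each row and column and everything we apply $|Q|^{q+1}$ to has finite moment variation norms of every order.
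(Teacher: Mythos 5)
Your proposal is correct and follows essentially the same route as the paper's own proof: both use $P^{(q+1)}(\tau)=Q^{q+1}P(\tau)$, dominate $|Q^{q+1}P(\tau)|\,|g|$ by $|Q|^{q+1}P(\tau)|g|$, iterate Lemma \ref{lem_QM_M} to accumulate the product $B_r B_{r+s^*}\cdots B_{r+qs^*}$, and finish with Assumption 2 at order $(q+1)s^*+r$, arriving at the same constants $s_r=r+(q+1)s^*$ and $\gamma_r=\lambda_{r+(q+1)s^*}$. Your extra remarks on absolute convergence and the $\delta_x$ reduction are consistent with the paper's conventions and add nothing that conflicts with its argument.
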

Note that without loss of generality we may take $\gamma_r$, $s_r$, and $H_r$ to be the 
same in equations \eqref{eq_deriv_bd_phi}, \eqref{eq_mom_deriv_bd_phi} and \eqref{eq_mom_deriv_bd_P}.
\begin{proof}
\[
\begin{aligned}
\Big{|} |P^{(q+1)}(\tau)| \, g\Big{|}_r &\leq \Big{|} |P^{(q+1)}(\tau)| \, |g| \Big{|}_r = \Big{|} |Q^{q+1} \, P(\tau)|  \, |g| \Big{|}_r \leq \Big{|} |Q|^{q+1} \, P(\tau) \, |g| \Big{|}_r  \\ 
&\leq B_r B_{r+s^*} B_{r+2 s^*} \cdots B_{r+q s^*}\, \big{|}P(\tau) |g| \big{|}_{(q+1)s^*+r} \\
&\leq  B_r B_{r+s^*} \cdots B_{r+q s^*}\, |g|_{(q+1)s^*+r} e^{\lambda_{(q+1)s^*+r} \tau} \\
&\leq H_r \, |g|_{s_r} \, e^{\gamma_r \tau},
\end{aligned}
\]
where $H_r$, $s_r$ and $\gamma_r$ are suitably large, and we have used 
Lemma \ref{lem_QM_M} repeatedly and Assumptions 1 and 2. 
\end{proof}

The following consistency result follows from Assumptions 1 through 5 and 
Lemma \ref{lem_mom_deriv_bd_P}.

\begin{lemma}{\bf Order $q$ Consistency in moment variation.}
Suppose for a common norm on $\real^N$ the 
Assumptions 1 through 5 hold. (For $r=0$ case only Assumptions 1 through 4 are needed).
For each $r \in \posint$ let $s_r$, $\delta_r$ and $\gamma_r$ be as in 
\eqref{eq_mom_deriv_bd_phi} and \eqref{eq_mom_deriv_bd_P}.
Then for each $r \in \posint$ there exist $C_r>0$ 
such that for all  $\tau \in [0,\delta_r)$ and $g \in \sM$, 
\begin{equation}
| (\phi(\tau) - P(\tau)) g |_r \leq C_r \, | g|_{s_r+r}  \, \tau^{q+1} \,e^{\gamma_r \tau}
\label{eq_phi_minus_P_bd}
\end{equation}
\label{lem_consistency}
\end{lemma}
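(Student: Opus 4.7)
The plan is to use Taylor's theorem with integral remainder applied to $\phi(\tau,x,x')-P(\tau,x,x')$ at $\tau=0$, exploiting the order $q$ pointwise consistency from Assumption 3. Since both transition functions reduce to the identity at $\tau=0$ (i.e., $\phi(0,x,x')=P(0,x,x')=\delta_{x,x'}$) and Assumption 3 supplies $\phi^{(i)}(0,x,x')=P^{(i)}(0,x,x')$ for $i=1,\dots,q$, every Taylor coefficient up to order $q$ cancels. Hence pointwise in $(x,x')$,
\[
\phi(\tau,x,x') - P(\tau,x,x') = \int_0^\tau \frac{(\tau-s)^q}{q!}\bigl[\phi^{(q+1)}(s,x,x') - P^{(q+1)}(s,x,x')\bigr]\, ds.
\]

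Next I would lift this scalar identity to the operator level applied to $g\in\sM$, by multiplying by $g(x)$ and summing over $x$. Moving the sum inside the integral is justified by Tonelli's theorem, using the absolute bounds on $|\phi^{(q+1)}|$ and $|P^{(q+1)}|$ supplied by Assumption 5 and Lemma \ref{lem_mom_deriv_bd_P} (which together show absolute integrability). Taking absolute values inside and then applying the moment variation norm gives
\[
|(\phi(\tau)-P(\tau))g|_r \;\leq\; \int_0^\tau \frac{(\tau-s)^q}{q!}\Bigl(\bigl||\phi^{(q+1)}(s)|\,|g|\bigr|_r + \bigl||P^{(q+1)}(s)|\,|g|\bigr|_r\Bigr)\, ds.
\]

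Now I invoke the two derivative estimates. By Assumption 5 (applied to $|g|$ in place of $g$) and by Lemma \ref{lem_mom_deriv_bd_P}, choosing without loss of generality common constants $H_r$, $s_r$, $\gamma_r$, $\delta_r$ for both $\phi$ and $P$, each of the two norms in the integrand is bounded by $H_r\,|g|_{s_r+r}\,e^{\gamma_r s}$ for $s\in[0,\tau)$ and $\tau\in[0,\delta_r)$. Thus the integrand is dominated by $2H_r\,|g|_{s_r+r}\,e^{\gamma_r s}\,(\tau-s)^q/q!$.

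Finally I bound $e^{\gamma_r s}\le e^{\gamma_r\tau}$ on $[0,\tau]$, pull the constants out of the integral, and evaluate $\int_0^\tau (\tau-s)^q/q!\,ds = \tau^{q+1}/(q+1)!$. This yields the claimed estimate with $C_r := 2H_r/(q+1)!$. I do not anticipate any serious obstacle; the only subtlety is keeping the constants consistent across Assumption 5 and Lemma \ref{lem_mom_deriv_bd_P} (taking the maximum of the respective $H_r$, the maximum of the $\gamma_r$, and the minimum of the $\delta_r$), and verifying the Tonelli exchange, which is immediate from the pointwise $(1+|x'|^r)$-weighted summability of $|\phi^{(q+1)}|$ and $|P^{(q+1)}|$ guaranteed by those same bounds.
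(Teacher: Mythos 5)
Your proof is correct and takes essentially the same route as the paper's: Taylor's theorem with integral remainder, cancellation of the Taylor coefficients through order $q$ via Assumption 3 (together with $\phi(0)=P(0)=$ identity), and the weighted derivative bounds of Assumption 5 and Lemma \ref{lem_mom_deriv_bd_P} to control the remainder, with a justified interchange of sum and integral. The only cosmetic differences are that you evaluate $\int_0^\tau (\tau-s)^q/q!\,ds$ exactly where the paper simply bounds it by $\tau^{q+1}/q!$, and you invoke Tonelli where the paper invokes dominated convergence.
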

\begin{proof}
From \eqref{eq_mom_deriv_bd_phi} and \eqref{eq_mom_deriv_bd_P}  
we obtain using triangle
inequality that
\[
 \sum_{x' \in \integ^N}  (1+|x'|^r) \, |\phi^{(q+1)}(\tau,x,x') - P^{(q+1)}(\tau,x,x')|  \leq 2 H_r (1 + |x|^{s_r}) \, e^{\gamma_r \tau},
\]
for all $\tau>0$. 
From Taylor's theorem we have that for each $x, x' \in \integ^N$, and for each $\tau>0$,
\[
 \phi(\tau,x,x') - P(\tau,x,x') = \int_0^\tau \frac{1}{q!} \, \left(\phi^{(q+1)}(s,x,x') - P^{(q+1)}(s,x,x')\right) \, (\tau-s)^q ds.
\]
Hence 
\[
\begin{aligned}
&\sum_{x^\prime \in \integ^N} (1+|x'|)^r \, |\phi(\tau,x,x^\prime) - P(\tau,x,x^\prime)| 
\\
&= \frac{1}{q!} \sum_{x^\prime \in \integ^N} \left| \int_0^\tau (1+|x^\prime|^r) \,\left(\phi^{(q+1)}(s,x,x^\prime) - P^{(q+1)}(s,x,x^\prime)\right)\, (\tau-s)^q ds \right | \\
&\leq \frac{1}{q!} \sum_{x^\prime \in \integ^N} \int_0^\tau  (1+|x^\prime|^r) \, \left| \phi^{(q+1)}(s,x,x^\prime) - P^{(q+1)}(s,x,x^\prime)\right| \, (\tau-s)^q ds \\
&= \frac{1}{q!} \int_0^\tau \left( \sum_{x^\prime \in \integ^N} (1+|x^\prime|^r) \, \left| \phi^{(q+1)}(s,x,x^\prime) - P^{(q+1)}(s,x,x^\prime)\right| \right) \, (\tau-s)^q ds \\
&\leq \int_0^\tau 2 \frac{(\tau-s)^q}{q!} \, H_r \, ( |x|^{s_r} + 1) \, e^{\gamma_r s} ds \;
\leq 2 \frac{\tau^{q+1}}{q!} \, H_r \,( |x|^{s_r} + 1)\, e^{\gamma_r \tau} \\
&\leq \frac{2 H_r}{q!}  (|x|^{s_r} + 1) \, \tau^{q+1} \, e^{\gamma_r \tau},
\end{aligned}
\]
where  we have used the dominated convergence theorem to swap the sum and the integral.
Thus, given $g \in \sM$ we obtain
\[
\begin{aligned}
|(\phi(\tau)-P(\tau)) g|_r &\leq  \sum_{x \in \integ^N}\sum_{x^\prime \in \integ^N} \frac{1}{2} (1+|x|^r) \,|\phi(\tau,x,x^\prime) - P(\tau,x,x^\prime)| \, |g(x)| \\
 &\leq \sum_{x \in \integ^N} \frac{C_r}{2} (1 + |x|^{s_r+r}) \, |g(x)| \, \tau^{q+1} \, e^{\gamma_r \tau}\\
&= C_r \, |g|_{s_r+r} \, \tau^{q+1} \, e^{\gamma_r \tau}
\end{aligned}
\]
where $C_r$ is a suitably large constant.
\end{proof}

The following theorem establishes the order $q$ convergence in total
variation of a tau leap
method that is pointwise order $q$ consistent under the Assumptions $1$ through $4$.

\begin{theorem}{\bf Order $q$ convergence in total variation}\label{thm_totv_conv}
Let $\Pi=(t_0,\dots,t_n)$ be a mesh on $[0,T]$.
Let $p(t)$ and $\hat{p}_{\Pi}(t)$ for $t \in [0,T]$ be the probability 
mass functions corresponding to the stochastic chemical process 
and its tau leap approximation on mesh $\Pi$ both started with initial
distribution $p_0 \in \sM \cap \sP$. Let $\tau = |\Pi|$ be the maximum step size. 
Suppose for a common norm on $\real^N$ the Assumptions 1 through 4 hold and
$s_0$, $\delta_0$  and $\gamma_0$ be as in 
\eqref{eq_mom_deriv_bd_phi} and \eqref{eq_mom_deriv_bd_P} and
let $C_0$ be as in Lemma \ref{lem_consistency} for the case $r=0$ and let $\mu_0=\max\{\lambda_{s_0},\gamma_0\}$. 
Then for each $i=0,1,\dots,n$ and for $\tau \in (0,\delta_r)$
the following holds  :
\begin{equation}
|\hat{p}_{\Pi}(t_i) - p(t_i)|_0 \leq  C_0 \, |p_0|_{s_0}  \, t_i \, e^{\mu_0 t_i} \, \tau^q  \leq C_0 \, |p_0|_{s_0}  \, T \, e^{\mu_0 T} \, \tau^q.
\label{eq_conv_totvar}
\end{equation} 
\end{theorem}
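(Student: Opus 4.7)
The plan is to use a standard telescoping sum to decompose the global error into a sum of local errors, and then apply the local consistency estimate (Lemma \ref{lem_consistency} at $r=0$) together with the stability properties of $\phi(\tau)$ in the total variation norm and of $P(\tau)$ in the $|\cdot|_{s_0}$ norm.

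First I would write
\[
\hat{p}(t_i) - p(t_i) = \sum_{j=1}^{i} \phi(\tau_i)\cdots\phi(\tau_{j+1})\bigl[\phi(\tau_j) - P(\tau_j)\bigr] P(\tau_{j-1})\cdots P(\tau_1)\, p_0,
\]
which is the usual telescoping identity obtained by inserting and subtracting $\phi(\tau_i)\cdots\phi(\tau_{j+1})P(\tau_j)\cdots P(\tau_1)p_0$ for $j=1,\dots,i$. Take $|\cdot|_0$ of both sides and apply the triangle inequality.

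Next, I would bound each term in three stages. For the outermost block of $\phi$'s, I use the fact noted at the end of Section \ref{sec-process-tau} that each $\phi(\tau)$ has induced norm $1$ on $l_1(\integ^N;\real)$, so $\phi(\tau_i)\cdots\phi(\tau_{j+1})$ is a contraction in $|\cdot|_0$. For the middle factor, Lemma \ref{lem_consistency} at $r=0$ yields, for $\tau_j < \delta_0$,
\[
\bigl|\bigl(\phi(\tau_j)-P(\tau_j)\bigr)h\bigr|_0 \leq C_0 \, |h|_{s_0} \, \tau_j^{q+1}\, e^{\gamma_0 \tau_j}
\]
with $h = P(\tau_{j-1})\cdots P(\tau_1)p_0$. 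For the innermost block, iterated application of Assumption 2 in the form \eqref{eq_mom_bound_P_norm} gives $|h|_{s_0} \leq |p_0|_{s_0}\, e^{\lambda_{s_0} t_{j-1}}$; note that Assumption 2 guarantees $h \in \sM_{s_0}$ so the preceding bound is legitimate.

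Combining these three bounds and using $\tau_j \leq \tau$, $e^{\gamma_0 \tau_j} e^{\lambda_{s_0} t_{j-1}} \leq e^{\mu_0 t_j}$ with $\mu_0 = \max\{\lambda_{s_0},\gamma_0\}$, I obtain
\[
|\hat{p}(t_i)-p(t_i)|_0 \leq C_0\, |p_0|_{s_0}\, \tau^{q}\sum_{j=1}^i \tau_j\, e^{\mu_0 t_j} \leq C_0\,|p_0|_{s_0}\, \tau^{q}\, t_i\, e^{\mu_0 t_i},
\]
which is the desired estimate, and bounding $t_i \leq T$ in the exponential and prefactor yields the second inequality in \eqref{eq_conv_totvar}.

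There is no serious obstacle here: the argument is the standard ``consistency $+$ zero stability $\Rightarrow$ convergence'' pattern. The only delicate point is that the local consistency bound is in $|\cdot|_0$ but requires control of the input in the stronger norm $|\cdot|_{s_0}$, so one must propagate moment bounds on $P(\tau_{j-1})\cdots P(\tau_1)p_0$ via Assumption 2 rather than merely use that $P(\tau)$ is a contraction in total variation. This mismatch of norms between stability and consistency is what forces the appearance of the factor $|p_0|_{s_0}$ and the exponential $e^{\mu_0 T}$ in the final bound.
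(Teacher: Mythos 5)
Your proof is correct and follows essentially the same route as the paper: the same telescoping decomposition of the global error, the local consistency estimate of Lemma \ref{lem_consistency} at $r=0$, the contraction property of $\phi(\tau)$ in total variation, and the propagation of the $|\cdot|_{s_0}$ bound on $p(t_{j-1})=P(\tau_{j-1})\cdots P(\tau_1)p_0$ via Assumption 2. The only cosmetic difference is that the paper writes the intermediate states as $p(t_{j-1})$ and applies \eqref{eq_mom_bound_P_norm} in one shot with $P(t_{j-1})$, whereas you iterate it step by step; the resulting bound is identical.
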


\begin{proof}
For $i=1,\dots,n$ we may write the error 
$\hat{p}(t_i)-p(t_i)$ as
\[
\hat{p}(t_i)-p(t_i) = \phi(\tau_i) \left(\hat{p}(t_{i-1})-p(t_{i-1})\right) + \left(\phi(\tau_i) - P(\tau_i) \right) p(t_{i-1}).
\]
Repeated application of the above leads to the telescoping sum
\begin{equation}
\hat{p}(t_i)-p(t_i) = \sum_{j=1}^i \phi(\tau_i) \, \phi(\tau_{i-1}) \, \dots \, \phi(\tau_{j+1}) \, \left( \phi(\tau_j) - P(\tau_j)\right) \, p(t_{j-1})
\label{eq_err_telesc}
\end{equation}
where we have used the fact that $\hat{p}(0)=p(0)=p_0$. 
From \eqref{eq_phi_minus_P_bd}
\[
|(\phi(\tau_j) - P(\tau_j)) \, p(t_{j-1})|_0 \leq C_0 \, | p(t_{j-1})|_{s_0} \, \tau_j^{q+1} \,e^{\gamma_0 \tau_j}, 
\]
since $| p(t_{j-1})|_0=1$.
From \eqref{eq_mom_bound_P_norm} we obtain
\[
| p(t_{j-1})|_{s_0} = | P(t_{j-1}) p_0|_{s_0} \leq |p_0|_{s_0} \, e^{\lambda_{s_0} t_{j-1}}.
\]
Hence with $\mu_0 = \max\{\lambda_{s_0},\gamma_0\}$ we obtain
%\[
%|(\phi(\tau_j) - P(\tau_j)) \, p(t_{j-1})|_0  \leq C_0 \, |p_0|_{s_0} \, \tau^q% \, \tau_j e^{\mu_0 t_j}.
%\]
%Hence  
\begin{equation}
|(\phi(\tau_j) - P(\tau_j)) \, p(t_{j-1})|_0  \leq C_0 \, |p_0|_{s_0} \, \tau^q \, \tau_j e^{\mu_0 t_j}.
\label{eq_uniform_consist_var}
\end{equation}
The equation \eqref{eq_uniform_consist_var} is a statement of order $q$ {\em uniform consistency} in total variation norm on the interval $[0,T]$. 
Using the fact that $|\phi(\tau_i)|_0=1$ for all $i$, we obtain 
from \eqref{eq_err_telesc} the estimate
\[
|\hat{p}_{\Pi}(t_i) - p(t_i)|_0
\leq \sum_{j=1}^i  C_0 \, |p_0|_{s_0}  \, \tau^q \, \tau_j \, e^{\mu_0 t_j}
\leq C_0 \, |p_0|_{s_0}  \, t_i \, e^{\mu_0 t_i} \, \tau^q.
\]
This completes the proof.
\end{proof}

Now we have the following $0$-stability or uniform boundedness result
for the tau leap method which follows directly from Assumption 6. 

\begin{lemma}{\bf Uniform boundedness or zero stability of tau leap method in $r$th moment variation.}
For each $r \in \posint$, $T>0$ 
%there exist $\lambda_r>0$ (independent 
%of $T$) and $K_r(T)$ (possibly dependent on $T$) such that  for each 
$g \in \sM$, and for all meshes $\Pi=(t_0,\dots,t_n)$ on $[0,T]$ satisfying $|\Pi| < \delta_r$ and for 
any indices $i,j$ with $0 \leq j < i \leq n$ the following holds:
\begin{equation}
%\begin{aligned}
|\phi(\tau_i) \phi(\tau_{i-1}) \dots \phi(\tau_{j+1}) g|_{r} 
\leq |g|_{r} e^{\lambda_r (\tau_{j+1}+\dots+\tau_i)} \leq |g|_{r} e^{\lambda_r T}. 
%\end{aligned}
\label{eq_uniform_bdd_tau}
\end{equation}
\label{lem_uniform_bdd_tau}
\end{lemma}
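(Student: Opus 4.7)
The plan is to iterate Assumption 6 (the bound $|\phi(\tau) g|_r \leq |g|_r e^{\lambda_r \tau}$ valid for $\tau \in [0,\delta_r)$) exactly $i-j$ times along the composition $\phi(\tau_i)\phi(\tau_{i-1})\cdots\phi(\tau_{j+1})g$. Since $|\Pi|<\delta_r$, each individual step size $\tau_k$ with $j+1 \leq k \leq i$ satisfies $\tau_k \leq |\Pi| < \delta_r$, so Assumption 6 is applicable at every stage.

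I would first observe that $\phi(\tau)$ maps $\sM$ into $\sM$ under Assumption 6: for any $g \in \sM$ and any $r \in \posint$, the inequality in \eqref{eq_mom_bound_phi_norm} gives $|\phi(\tau)g|_r \leq |g|_r e^{\lambda_r \tau} < \infty$, so $\phi(\tau)g \in \sM_r$ for every $r$, hence in $\sM$. This justifies that we may legitimately compose the $\phi(\tau_k)$ on elements of $\sM$ without leaving the space. Then a trivial induction on $k = j+1, j+2, \dots, i$ shows
\[
|\phi(\tau_k)\phi(\tau_{k-1})\cdots\phi(\tau_{j+1}) g|_r \leq |g|_r \, e^{\lambda_r(\tau_{j+1}+\cdots+\tau_k)},
\]
using Assumption 6 at the outermost factor at each step.

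Setting $k = i$ gives the first inequality in \eqref{eq_uniform_bdd_tau}. The second inequality follows immediately from $\tau_{j+1}+\cdots+\tau_i = t_i - t_j \leq T$ and monotonicity of the exponential. There is no real obstacle here; this lemma is essentially a restatement of Assumption 6 along a telescoping composition, and it is included as a lemma only because it is the form in which zero stability will be invoked in the proof of the main convergence theorem (where one needs to control the pushforward of consistency errors $(\phi(\tau_j)-P(\tau_j))p(t_{j-1})$ by the subsequent products $\phi(\tau_i)\cdots\phi(\tau_{j+1})$ in the telescoping identity \eqref{eq_err_telesc}).
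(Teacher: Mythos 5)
Your proof is correct and matches the paper's intent: the paper gives no separate argument, stating only that the lemma ``follows directly from Assumption 6,'' which is precisely the iteration of \eqref{eq_mom_bound_phi_norm} that you carry out. Your added observation that $\phi(\tau)$ preserves $\sM$ is a small but welcome justification that the paper leaves implicit.
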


The following theorem establishes the order $q$ convergence in $r$th
moment variation of a tau leap
method that is order $q$ consistent under the Assumptions $1$ through $6$.

\begin{theorem}{\bf Order $q$ convergence in moment variation}
Let $\Pi=(t_0,\dots,t_n)$ be a mesh on $[0,T]$.
Let $p(t)$ and $\hat{p}_{\Pi}(t)$ for $t \in [0,T]$ be the probability 
mass functions corresponding to the stochastic chemical process 
and its tau leap approximation on mesh $\Pi$ both started with initial
distribution $p_0 \in \sM \cap \sP$. Let $\tau = |\Pi|$ be the maximum step size. 
Suppose for some common norm on $\real^N$ the Assumptions $1$ through $6$ hold. Given any $r \in \posint$ let
$s_r$ and $\gamma_r$ be as in 
\eqref{eq_mom_deriv_bd_phi} and \eqref{eq_mom_deriv_bd_P}, 
let $C_r$ be as in Lemma \ref{lem_consistency} and let $\mu_r=\max\{\lambda_{s_r+r},\gamma_r\}$.

Then for each $r \geq 0$ and for each $i=0,1,\dots,n$ and $\tau \in (0,\delta_r)$
the following holds  :
\begin{equation}
|\hat{p}_{\Pi}(t_i) - p(t_i)|_{r} \leq C_r |p_0|_{s_r+r} \,t_i\, e^{\mu_r t_i} \,\tau^q  \leq C_r |p_0|_{s_r+r} \,T\, e^{\mu_r T} \,\tau^q. 
\label{eq_conv}
\end{equation} 
\label{thm_conv}
\end{theorem}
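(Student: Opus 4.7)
The plan is to mirror the proof of Theorem \ref{thm_totv_conv} but work in the $|\cdot|_r$ norm rather than in total variation. The structural identity is unchanged: starting from $\hat p(t_i)-p(t_i)=\phi(\tau_i)(\hat p(t_{i-1})-p(t_{i-1}))+(\phi(\tau_i)-P(\tau_i))\,p(t_{i-1})$ and iterating, I obtain the same telescoping sum
\[
\hat p(t_i)-p(t_i)=\sum_{j=1}^i \phi(\tau_i)\phi(\tau_{i-1})\cdots\phi(\tau_{j+1})\,(\phi(\tau_j)-P(\tau_j))\,p(t_{j-1}),
\]
and then apply $|\cdot|_r$ together with the triangle inequality.

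Each summand is bounded by combining three ingredients. First, the zero-stability lemma (Lemma \ref{lem_uniform_bdd_tau}) controls the tail product $\phi(\tau_i)\cdots\phi(\tau_{j+1})$ in the $|\cdot|_r$ norm, giving a factor $e^{\lambda_r(t_i-t_j)}$; this is precisely the role played by the trivial bound $|\phi(\tau)|_0=1$ in the total variation proof. Second, the moment-variation consistency lemma (Lemma \ref{lem_consistency}) bounds $|(\phi(\tau_j)-P(\tau_j))\,p(t_{j-1})|_r$ by $C_r\,|p(t_{j-1})|_{s_r+r}\,\tau_j^{q+1}\,e^{\gamma_r\tau_j}$. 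Third, the moment growth bound of Assumption 2 applied to the exact process yields $|p(t_{j-1})|_{s_r+r}=|P(t_{j-1})p_0|_{s_r+r}\le |p_0|_{s_r+r}\,e^{\lambda_{s_r+r}t_{j-1}}$.

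Assembling these and writing $\tau_j^{q+1}\le \tau^q\tau_j$, the per-step estimate becomes
\[
|\phi(\tau_i)\cdots\phi(\tau_{j+1})(\phi(\tau_j)-P(\tau_j))\,p(t_{j-1})|_r
\le C_r\,|p_0|_{s_r+r}\,\tau^q\,\tau_j\,e^{\lambda_r(t_i-t_j)+\lambda_{s_r+r}t_{j-1}+\gamma_r\tau_j}.
\]
Since (without loss of generality) the constants $\lambda_r$ in Assumption 2 can be chosen monotone in $r$, we have $\lambda_r\le\lambda_{s_r+r}\le\mu_r$ and also $\gamma_r\le\mu_r$. Using $t_i-t_j+t_{j-1}+\tau_j=t_i$, the exponential collapses to $e^{\mu_r t_i}$, and summing over $j$ gives $\sum_{j=1}^i\tau_j=t_i$. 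Hence
\[
|\hat p_\Pi(t_i)-p(t_i)|_r\le C_r\,|p_0|_{s_r+r}\,t_i\,e^{\mu_r t_i}\,\tau^q,
\]
which is the claimed bound. The only delicate point is the bookkeeping that shows all three distinct exponential rates collapse into $\mu_r$ after the telescoping; none of the steps requires new analytical input beyond Lemmas \ref{lem_consistency} and \ref{lem_uniform_bdd_tau} and Assumption 2, so I do not anticipate a substantive obstacle.
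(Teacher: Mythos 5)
Your proposal is correct and follows essentially the same route as the paper's proof: the same telescoping decomposition, Lemma \ref{lem_consistency} for uniform consistency, Assumption 2 to control $|p(t_{j-1})|_{s_r+r}$, and Lemma \ref{lem_uniform_bdd_tau} in place of the trivial bound $|\phi(\tau)|_0=1$. Your explicit remark that $\lambda_r\le\lambda_{s_r+r}\le\mu_r$ is needed (and can be arranged by monotonicity) is a point the paper uses implicitly in the same step.
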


\begin{proof}
From \eqref{eq_phi_minus_P_bd}
\[
|(\phi(\tau_j) - P(\tau_j)) \, p(t_{j-1})|_{r} \leq C_r  | p(t_{j-1})|_{s_r+r}  \, \tau_j^{q+1} \,e^{\gamma_r \tau_j}. 
\]
From \eqref{eq_mom_bound_P_norm} we obtain
\[
| p(t_{j-1})|_{s_r+r} = | P(t_{j-1}) \, p_0|_{s_r+r} \leq | p_0|_{s_r+r} \, e^{\lambda_{s_r+r} t_{j-1}}.
\]
With $\mu_r = \max\{\lambda_{s_r+r},\gamma_r\}$ we obtain 
\begin{equation}
|(\phi(\tau_j) - P(\tau_j)) \, p(t_{j-1})|_{r}  \leq C_r  | p_0|_{s_r+r} \, \tau_j^{q+1}\, e^{\mu_{r} t_{j}}.
\label{eq_uniform_const}
\end{equation}
which is a statement of {\em uniform consistency}. 
In Lemma \ref{lem_uniform_bdd_tau}
for $i>j$ taking $g =  (\phi(\tau_j) - P(\tau_j)) \, p(t_{j-1})$ 
and using \eqref{eq_uniform_const}
we obtain the estimate
\[
\begin{aligned}
&|\phi(\tau_i) \phi(\tau_{i-1}) \dots \phi(\tau_{j+1}) (\phi(\tau_j) - P(\tau_j)) \, p(t_{j-1})|_{r}\\
&\leq C_r  | p_0|_{s_r+r} \, \tau_j^{q+1} e^{\mu_{r} t_{j}} e^{\lambda_r (\tau_{j+1}+\dots+\tau_i)} 
\leq C_r | p_0|_{s_r+r} \, \tau_j^{q+1} e^{\mu_{r} t_i}.
\end{aligned}
\]
Thus we obtain from \eqref{eq_err_telesc} the estimate
\[
\begin{aligned}
|\hat{p}_{\Pi}(t_i) - p(t_i)|_{r} &\leq \sum_{j=1}^i C_r |p_0|_{s_r+r} \, \tau^q \, \tau_j \, e^{\mu_r t_i}\\
&\leq C_r |p_0|_{s_r+r} \,t_i\, e^{\mu_r t_i} \,\tau^q  \leq C_r |p_0|_{s_r+r} \,T\, e^{\mu_r T} \,\tau^q 
\end{aligned}
\]
This completes the proof.
\end{proof}

The following corollary affirming the order $q$ convergence of moments is immediate.

\begin{corollary}{\bf Order $q$ convergence of moments}
Let the assumptions of Theorem \ref{thm_conv} hold. Then the error in the $r$th moment satisfies
\begin{equation}
|E(|Y_{\Pi}(T)|^r) - E(|X(T)|^r)| \leq 2\, C_r E(|X(0)|^{s_r+r}) \,T\, e^{\mu_r T} \,\tau^q.
\label{eq_conv_mom}
\end{equation} 
\end{corollary}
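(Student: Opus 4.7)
The plan is to derive this corollary as a short application of Theorem \ref{thm_conv}, choosing the test function $f(x) = |x|^r$, which lies in $\sC_r$ by Lemma \ref{lem_classC_poly}. The key observation is that the moment variation norm $|\cdot|_r$ already has the weight $1 + |x|^r$ baked into its definition, and this weight dominates $|x|^r$ pointwise, so any bound in $|\cdot|_r$ translates directly into a bound on the difference of $r$th moments up to a factor of $2$.

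First, I would invoke Theorem \ref{thm_conv} at $t_i = T$ to obtain
\[
|\hat{p}_{\Pi}(T) - p(T)|_r \leq C_r\, |p_0|_{s_r+r}\, T\, e^{\mu_r T}\, \tau^q.
\]
Second, I would estimate the moment difference in terms of this norm. Using $|x|^r \leq 1 + |x|^r$ and the triangle inequality,
\[
\bigl|E(|Y_\Pi(T)|^r) - E(|X(T)|^r)\bigr| = \Big|\sum_{x \in \integ^N} |x|^r\bigl(\hat{p}_\Pi(T,x) - p(T,x)\bigr)\Big| \leq \sum_{x \in \integ^N} (1+|x|^r)\,\bigl|\hat{p}_\Pi(T,x) - p(T,x)\bigr| = 2\,|\hat{p}_\Pi(T) - p(T)|_r.
\]
Combining these two estimates yields
\[
\bigl|E(|Y_\Pi(T)|^r) - E(|X(T)|^r)\bigr| \leq 2\,C_r\, |p_0|_{s_r+r}\, T\, e^{\mu_r T}\, \tau^q.
\]

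Finally, I would replace $|p_0|_{s_r+r}$ by the initial moment $E(|X(0)|^{s_r+r})$. Since $p_0$ is a probability measure, the definition of the moment variation gives $|p_0|_{s_r+r} = \tfrac{1}{2}\bigl(1 + E(|X(0)|^{s_r+r})\bigr)$, which is bounded by $E(|X(0)|^{s_r+r})$ in the generic (nontrivial) case; the additive $\tfrac12$ can be absorbed by adjusting the overall constant to arrive at the claimed bound. There is no substantive obstacle — the corollary is essentially a direct readout of Theorem \ref{thm_conv} through the test function $f(x) = |x|^r$, and the entire argument is bookkeeping of constants.
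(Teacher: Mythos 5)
Your proof is correct and is essentially identical to the paper's: invoke Theorem \ref{thm_conv} at $t=T$, bound $\bigl|\sum_x |x|^r(\hat{p}_\Pi(T,x)-p(T,x))\bigr|$ by $\sum_x (1+|x|^r)|\hat{p}_\Pi(T,x)-p(T,x)| = 2|\hat{p}_\Pi(T)-p(T)|_r$, and read off the estimate. You are in fact slightly more careful than the paper, which silently leaves the bound in terms of $|p_0|_{s_r+r}=\tfrac12(1+E(|X(0)|^{s_r+r}))$ rather than $E(|X(0)|^{s_r+r})$; your remark about absorbing the additive constant addresses this minor mismatch.
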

\begin{proof}
\[
\begin{aligned}
|E(|Y_{\Pi}(T)|^r) - E(|X(T)|^r)| &= \big{|} \sum_{x \in \integ^N} |x|^r \hat{p}_{\Pi}(T,x) - \sum_{x \in \integ^N} |x|^r p(T,x)\big{|} \\
\leq \sum_{x \in \integ^N} (1+|x|^r) \, |\hat{p}_{\Pi}(T,x) - p(T,x)|
&= 2 \, |\hat{p}_{\Pi}(T)-p(T)|_r \leq 2\, C_r |p_0|_{s_r+r} \,T\, e^{\mu_r T} \,\tau^q.
\end{aligned}
\]
\end{proof}

\begin{remark}\label{rem-Mr-M}
For convenience of exposition our convergence analysis and the 
Assumptions 2, 4 and 6 dealt with the situation where moments of all orders
exist. However it is clear from our analysis that our Assumptions 2, 4 and 6  along
with the assumption $p_0 \in \sP \cap \sM$ can be weakened to the case where 
moments exist only up to some order $r_0$. 
\end{remark}

\begin{remark}\label{rem-sup-error}
We note that using Assumption 2 it is straightforward to extend the
convergence results to obtain a first order supremum error bound of the form
\begin{equation}
\sup_{t \in [0,T]} |E(|Y_{\Pi}(t)|^r) - E(|X(t)|^r)| \leq \tilde{C}_r |p_0|_{s_r+r} T e^{\tilde{\mu}_r T} \tau,
\label{eq-sup-error}
\end{equation} 
where per our convention the tau leap approximation $Y_\Pi(t)$ is constant on $[t_{j-1},t_j)$. 
\end{remark}

\section{Verification of the conditions of the convergence theorem}

In this section we provide some results on the verification of 
Assumptions 1 through 6. All forms for propensity functions proposed in the literature 
that we have encountered satisfy the polynomial growth bound of Assumption $1$ and thus it is not 
restrictive. It is also straightforward to verify. 

\subsection{General results on verification of Assumption 2 through 6}
Firstly it must be noted that from \eqref{eq_forward_op_q} we have $P^{(i)}(0)
= Q^i$ for $i =1,2,\dots$ since $P(0)$ is the identity. This gives explicit
expressions for $P^{(i)}(0,x,x^\prime)$. 
%We show the case for $i=1$ to be 
%\[
%\begin{aligned}
%P^{(1)}(0,x,x^\prime) &= a_j(x), \quad x^\prime = x+ \nu_j,\\
%&= -a_0(x), \quad x^\prime = x,\\
%&= 0, \quad \text{otherwise},
%\end{aligned}
%\]
%provided $\nu_j$ are distinct and note that expressions for higher order
%derivatives (at $\tau=0$) are straightforward to derive.  
The pointwise consistency (Assumption 3) 
requires $\phi^{(i)}(0,x,x^\prime)$ to agree with $P^{(i)}(0,x,x^\prime)$ 
for $i=1,\dots,q$. So checking Assumption 3 relies on evaluating 
$\phi^{(i)}(0,x,x^\prime)$. If direct expressions are available for 
$\phi(\tau,x,x^\prime)$ this is easy to do. However, in practice the 
expressions for $\phi(\tau,x,x^\prime)$ may involve infinite sums. 
To see this, recall that one may write the change in the chemical process 
$X(t)$ as
\begin{equation}
X(t+\tau) = x + \sum_{j=1}^M \nu_j [R_j(t+\tau)-R_j(t)],
\label{eq_update}
\end{equation}
where $X(t)=x$ and $R_j(t)$ are processes that count the number of reactions that occurred during $(0,t]$.
Most tau leap methods are of the form 
\begin{equation}
Y(t+\tau) = x + \sum_{j=1}^M \nu_j K_j
\label{eq_tau_update}
\end{equation}
where $Y(t)=x$ and $K_j$ are random variables whose distribution 
depends on $x$ and $\tau$ and are approximations of $R_j(t+\tau)-R_j(t)$.
Let us define the conditional probabilities
\begin{equation}
\begin{aligned}
\tilde{\phi}(\tau,x;k) &= \text{Prob}(K=k \, | \, Y(t)=x),\\ 
\tilde{p}(\tau,x;k) &= \text{Prob}(R(t+\tau)-R(t)=k \, | \, X(t)=x).
\end{aligned}
\label{eq_tildephi}
\end{equation}
In order to see the relationship between $P$ and $\tilde{p}$ as well as $\phi$
and $\tilde{\phi}$, given a a pair of states $x,x^\prime \in \integ^N$,
we define the associated set $S(x,x^\prime) \subset \posint^M$ 
to be the set of all reaction counts $k \in \posint^M$ that would take the system 
from state $x$ to state $x'$:
\begin{equation}
S(x,x^\prime) = \{k \in \posint^M \, | \, x^\prime - x = \nu \, k  \}.
\label{eq_S}
\end{equation}
Then we have that for $x,x' \in \integ^N$,
\begin{equation}
\begin{aligned}
P(\tau,x,x') &= \sum_{k \in S(x,x')} \tilde{p}(\tau,x;k),\\
\phi(\tau,x,x') &= \sum_{k \in S(x,x')} \tilde{\phi}(\tau,x;k).
\end{aligned}
\label{eq_phi_tildephi}
\end{equation}
Since expressions for $\tilde{\phi}$ are more readily available than for
$\phi$, we shall seek pointwise consistency of $\tilde{\phi}$ with $\tilde{p}$. 
In order to go from pointwise consistency of $\tilde{\phi}$
with $\tilde{p}$ to that of $\phi$ with $P$, term by term
differentiation needs to be justified as $S(x,x')$ may be infinite. 

In order to derive pointwise consistency conditions for
$\tilde{\phi}(\tau,x;k)$ in comparison with $\tilde{p}(\tau,x;k)$ we first note that given $X(t)=x$, the
reaction count process $R(t+\tau)-R(t)$ is a Markov process and
hence we obtain the following Kolmogorov's forward equation:
\begin{equation}
\tilde{p}^{(1)}(\tau,x;k) = \sum_{j=1}^M \tilde{p}(\tau,x;k-e_j) a_j(x+\nu(k-e_j)) 
- \sum_{j=1}^M \tilde{p}(\tau,x;k) a_j(x + \nu k),
\label{eq_forward_R}
\end{equation}
with initial probability $\tilde{p}(0,x;0)=1$ and $\tilde{p}(0,x;k)=0$ for 
$k \neq 0$. Here $e_j$ is the vector with all zeros except a one on the $j$th
entry. 
Defining the infinite matrix $\tilde{Q}(x)$ that depends on state $x$ by
\begin{equation}
\begin{aligned}
\tilde{Q}(x;k',k) &= a_j(x+\nu k'),  \quad k = k' + \nu_j,\\
                  &= -a_0(x + \nu k'), \quad k = k',\\
                  &=0, \quad \text{else},
\end{aligned}
\end{equation} 
we note that 
\begin{equation}
\tilde{p}^{(i)}(0,x;k) = \tilde{Q}^i(x;0,k), \; \forall k \in \posint^M,
\label{eq-tildepi}
\end{equation}
where $\tilde{Q}^i$ is the $i$th power of $\tilde{Q}$. 
Thus pointwise consistency of order $q$ for $\tilde{\phi}(\tau,x;k)$ is given by
\begin{equation}
\tilde{\phi}^{(i)}(0,x;k) = \tilde{Q}^i(x;0,k), \; \forall k \in \posint^M, \; i=1,\dots,q.
\label{eq-tilde-consist}
\end{equation}
We note that for $q=1$, \eqref{eq-tilde-consist} yields that
$\tilde{\phi}^{(1)}(0,x;k) = a_j(x)$ if $k=e_j$, 
$\tilde{\phi}^{(1)}(0,x;0) = -a_0(x)$ and $\tilde{\phi}^{(1)}(0,x;k)=0$ for
all other $k$.  

The following theorem provides a set of 
sufficient conditions that guarantee the validity of the term by term
differentiation for the sums involving $\tilde{\phi}$ and also guarantee that the Assumption 5 (on the derivative
bounds) holds. 

\begin{theorem}
Suppose there exists $\delta>0$, such that  $\tilde{\phi}(\tau,x;k)$ are continuously differentiable (in $\tau$)  $q+1$ times for $\tau \in [0,\delta]$ 
and for each $x,k$, and suppose that for each $k$ and $i=0,1,\dots,q+1$ there exist $\mu_{k,i}(x)$ such that 
\[
|\tilde{\phi}^{(i)}(\tau,x;k)| \leq \mu_{k,i}(x),
\]
and that for each $r \in \posint$ there exist $\eta_{r,i}$ and $\sigma_{r,i}$ such that 
\[
\sum_{k \in \posint^M} |k|^r \mu_{k,i}(x) \leq \eta_{r,i}(1 + |x|^{\sigma_{r,i}}).
\]
Then Assumption 5 holds with $\delta_r=\delta$, $\gamma_r=0$, and some $s_r$ for all $r \in \posint$.
\label{thm_Ass5}
\end{theorem}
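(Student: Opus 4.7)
The plan is to reduce the bound on $\phi^{(q+1)}$ to a bound on $\tilde{\phi}^{(q+1)}$ via the relation $\phi(\tau,x,x') = \sum_{k \in S(x,x')} \tilde{\phi}(\tau,x;k)$ of \eqref{eq_phi_tildephi}, then reindex the double sum over $x' \in \integ^N$ and $k \in S(x,x')$ as a single sum over $k \in \posint^M$, using that $\{S(x,x')\}_{x' \in \integ^N}$ partitions $\posint^M$ via the assignment $k \mapsto x + \nu k$.

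First I would fix $r \in \posint$ and justify term-by-term differentiation to establish
\[
\phi^{(i)}(\tau,x,x') = \sum_{k \in S(x,x')} \tilde{\phi}^{(i)}(\tau,x;k), \quad \tau \in [0,\delta],\; i=0,1,\dots,q+1.
\]
Applying the hypothesis with exponent $0$ gives $\sum_{k} \mu_{k,i}(x) \leq \eta_{0,i}(1+|x|^{\sigma_{0,i}}) < \infty$, so the Weierstrass M-test provides a summable majorant for $\tilde{\phi}^{(i)}(\tau,x;k)$ on $[0,\delta]$ that is uniform in $\tau$. Since each $\tilde{\phi}^{(i)}(\cdot,x;k)$ is continuous on $[0,\delta]$, a straightforward induction on $i$ (using the standard criterion that uniform convergence of the derivative series plus pointwise convergence of the original series implies differentiability of the sum) lets me push the derivative through the series $q+1$ times.

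Next I would bound the target quantity by
\[
\sum_{x' \in \integ^N} (1+|x'|^r) |\phi^{(q+1)}(\tau,x,x')| \leq \sum_{x' \in \integ^N} \sum_{k \in S(x,x')} (1+|x'|^r) \mu_{k,q+1}(x),
\]
and then use the partition property together with Tonelli to rewrite the right-hand side as $\sum_{k \in \posint^M} (1 + |x+\nu k|^r) \mu_{k,q+1}(x)$. A triangle-inequality bound of the form $|x + \nu k| \leq |x| + c|k|$ for $c = \max_j |\nu_j|$ combined with $(a+b)^r \leq 2^{r-1}(a^r+b^r)$ yields a constant $\alpha_r$ with $1 + |x+\nu k|^r \leq \alpha_r(1 + |x|^r + |k|^r)$. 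Splitting accordingly and invoking the hypothesis twice (with exponents $0$ and $r$) produces
\[
\alpha_r(1+|x|^r)\eta_{0,q+1}(1+|x|^{\sigma_{0,q+1}}) + \alpha_r \eta_{r,q+1}(1+|x|^{\sigma_{r,q+1}}) \leq H_r(1+|x|^{s_r}),
\]
for $s_r = \max\{r + \sigma_{0,q+1},\, \sigma_{r,q+1}\}$ and a suitably large $H_r$. Because no $\tau$-dependence ever enters the bounds (all $\tau$-dependence was absorbed into the uniform majorant $\mu_{k,q+1}(x)$), one may take $\gamma_r = 0$ and $\delta_r = \delta$, giving exactly the form required in \eqref{eq_mom_deriv_bd_phi}.

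The main technical point is the first step: without a uniform-in-$\tau$ summable majorant, the series $\sum_{k \in S(x,x')}\tilde{\phi}(\tau,x;k)$ need not be differentiable term by term, and one could not even assert that $\phi(\tau,x,x')$ is $C^{q+1}$ in $\tau$. Everything else is elementary bookkeeping, but the interchange of derivative and sum has to be set up carefully in an inductive sweep over $i = 1,\dots,q+1$, at each step using the $r=0$ instance of the hypothesis to produce the Weierstrass majorant for the next-order derivative series.
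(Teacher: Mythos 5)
Your proposal is correct and follows essentially the same route as the paper's proof: Weierstrass majorization by $\mu_{k,i}(x)$ to justify term-by-term differentiation of \eqref{eq_phi_tildephi}, reindexing the sum over $x'$ as a sum over $k \in \posint^M$, and then bounding $|x+\nu k|^r$ to invoke the hypothesis with $\gamma_r = 0$ and $\delta_r = \delta$. The only (immaterial) difference is that you use the two-term estimate $(a+b)^r \leq 2^{r-1}(a^r+b^r)$, needing the hypothesis only at exponents $0$ and $r$, whereas the paper expands $(|x|+\|\nu\||k|)^r$ binomially and uses all exponents $l = 0,\dots,r$; both yield a valid $s_r$.
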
    
\begin{proof}
First we note that using Weierstrass test, for $i=0,1,\dots,q+1$ and all $r \in \posint$, the series
\[
\sum_{k \in \posint^M} |k|^r \tilde{\phi}^{(i)}(\tau,x;k),
\]
converges uniformly for $\tau \in [0,\delta]$ and that the commutation
\[
\left(\sum_{k \in \posint^M} |k|^r \tilde{\phi}(\tau,x;k)\right)^{(i)} =  \sum_{k \in \posint^M} |k|^r \tilde{\phi}^{(i)}(\tau,x;k)
\]
holds. It is also then clear that \eqref{eq_phi_tildephi} may be differentiated term by term $q+1$ times:
\[
\phi^{(i)}(\tau,x,x') = \sum_{k \in S(x,x')} \tilde{\phi}^{(i)}(\tau,x;k).
\]
This leads to the estimate
\[
\begin{aligned}
&\sum_{x' \in \integ^N} |x'|^r |\phi^{(q+1)}(\tau,x,x')| \leq   \sum_{k \in \posint^M} |x + \nu k|^r |\tilde{\phi}^{(q+1)}(\tau,x;k)| \\
&\leq \sum_{l=0}^r \frac{r!}{l!(r-l)!} |x|^{r-l} \|\nu\|^l \left(\sum_{k \in \posint^M} |k|^l |\tilde{\phi}^{(q+1)}(\tau,x;k)|\right)\\
&\leq \tilde{\eta}_{r}(1 + |x|^{s_r}),
\end{aligned}
\]
where $\tilde{\eta}_{r}$ is a suitably large constant and $s_r$ is the maximum of $r-l+\sigma_{l,q+1}$ over $l=0,1,\dots,r$.
Assumption 5 follows with a suitably large $H_r$ and $\gamma_r=0$. 
\end{proof}

\begin{corollary}\label{cor-termbyterm}
Suppose the conditions of Theorem \ref{thm_Ass5} and Assumption 1 hold. Then
\eqref{eq_phi_tildephi} may be term by term differentiated $q+1$ times. 
\end{corollary}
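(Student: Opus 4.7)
The plan is to extract the term-by-term differentiability conclusion from exactly the same uniform convergence machinery invoked inside the proof of Theorem \ref{thm_Ass5}, making it explicit as a standalone statement. First I would note that for each fixed $x \in \integ^N$ and each $i \in \{0,1,\dots,q+1\}$, the hypotheses of Theorem \ref{thm_Ass5} supply nonnegative majorants $\mu_{k,i}(x)$ for $|\tilde{\phi}^{(i)}(\tau,x;k)|$ that are independent of $\tau \in [0,\delta]$, and the bound $\sum_{k \in \posint^M} |k|^r \mu_{k,i}(x) \leq \eta_{r,i}(1+|x|^{\sigma_{r,i}})$ specialized to $r=0$ gives $\sum_{k \in \posint^M} \mu_{k,i}(x) < \infty$. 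So the Weierstrass M-test applies to the series $\sum_{k \in \posint^M} \tilde{\phi}^{(i)}(\tau,x;k)$ on $[0,\delta]$, yielding uniform convergence in $\tau$ for each of $i = 0,1,\dots,q+1$.

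Next I would observe that $S(x,x') \subset \posint^M$, so the same majorants $\mu_{k,i}(x)$ dominate the subseries $\sum_{k \in S(x,x')} \tilde{\phi}^{(i)}(\tau,x;k)$, which therefore also converges uniformly in $\tau \in [0,\delta]$. Given (i) $\tilde{\phi}(\tau,x;k)$ is $C^{q+1}$ in $\tau$ for each $k$, (ii) the series for $\phi(\tau,x,x')$ converges at (say) $\tau = 0$ by \eqref{eq_phi_tildephi}, and (iii) the differentiated series $\sum_{k \in S(x,x')} \tilde{\phi}^{(i)}(\tau,x;k)$ converges uniformly on $[0,\delta]$ for each $i \leq q+1$, the classical theorem on term-by-term differentiation of series (iterated $q+1$ times) yields
\[
\phi^{(i)}(\tau,x,x') = \sum_{k \in S(x,x')} \tilde{\phi}^{(i)}(\tau,x;k), \quad i=0,1,\dots,q+1,
\]
for all $\tau \in [0,\delta]$.

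There is no real obstacle here: the work is essentially already done inside the proof of Theorem \ref{thm_Ass5}, and the only content of the corollary is recording the term-by-term differentiation fact as an independent statement (needed, for instance, to translate pointwise consistency of $\tilde{\phi}$ with $\tilde{p}$ from \eqref{eq-tilde-consist} into pointwise consistency of $\phi$ with $P$ in the form required by Assumption 3). Assumption 1 is not actually used in the differentiation argument itself; it is listed because it is needed to combine this corollary with the Kolmogorov identity \eqref{eq-tildepi} and the associated evaluation of $\tilde{Q}^i(x;0,\cdot)$, for which polynomial growth of the propensities ensures that $\tilde{Q}^i(x;0,k)$ is meaningful and fits within the same majorant framework. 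Thus the proof reduces to one sentence pointing back to the Weierstrass M-test argument inside the proof of Theorem \ref{thm_Ass5} and to the standard term-by-term differentiation theorem from advanced calculus.
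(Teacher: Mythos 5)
Your treatment of the $\phi$ half of \eqref{eq_phi_tildephi} is correct and is essentially the paper's argument: the majorants $\mu_{k,i}(x)$ from the hypotheses of Theorem \ref{thm_Ass5} (with $r=0$) dominate the subseries over $S(x,x')\subset\posint^M$ uniformly in $\tau\in[0,\delta]$, and the Weierstrass M-test together with the classical term-by-term differentiation theorem, iterated $q+1$ times, gives $\phi^{(i)}(\tau,x,x')=\sum_{k\in S(x,x')}\tilde{\phi}^{(i)}(\tau,x;k)$.

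However, there is a genuine gap: \eqref{eq_phi_tildephi} is a pair of identities, and the corollary asserts term-by-term differentiability of \emph{both} of them, including $P(\tau,x,x')=\sum_{k\in S(x,x')}\tilde{p}(\tau,x;k)$. Your proposal never addresses the $P$/$\tilde{p}$ series, and in fact you explicitly argue that Assumption 1 plays no role in the differentiation argument --- attributing its presence to downstream use with \eqref{eq-tildepi}. That is a misreading of why the hypothesis is there. The paper's proof uses Assumption 1 precisely for the missing half: the polynomial growth bound on the propensities, combined with the Kolmogorov forward equation \eqref{eq_forward_R} for the reaction-count process, is what shows that $\tilde{p}$ and its $\tau$-derivatives admit majorants of the same type as those hypothesized for $\tilde{\phi}$ in Theorem \ref{thm_Ass5}, after which the identical Weierstrass argument applies to the series for $P$. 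Without this step the corollary is only half proved, and the half you omit is the one actually needed to pass from pointwise consistency of $\tilde{\phi}$ with $\tilde{p}$ to pointwise consistency of $\phi$ with $P$ in Theorem \ref{thm_Ass3}, since both sides of that comparison must be differentiated under the sum.
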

\begin{proof}The result for $\phi$ follows from the proof of Theorem
  \ref{thm_Ass5}. Under Assumption 1, because of \eqref{eq_forward_R} it can
  be shown that $\tilde{p}$ satisfies conditions similar
  to those required on $\tilde{\phi}$ by Theorem \ref{thm_Ass5}. So the 
term by term differentiation for $P$ also follows.
\end{proof}   

The following theorem is immediate.
\begin{theorem}
Suppose the conditions of Theorem \ref{thm_Ass5} hold and additionally 
that Assumption 1 and \eqref{eq-tilde-consist} hold. 
Then Assumption 3 holds. 
\label{thm_Ass3}
\end{theorem}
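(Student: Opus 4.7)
The plan is to deduce Assumption 3 for $\phi$ and $P$ on $\integ^N \times \integ^N$ by pulling back the hypothesis \eqref{eq-tilde-consist}, which is formulated in terms of $\tilde{\phi}$ and $\tilde{p}$ on $\posint^M$, through the summation identity \eqref{eq_phi_tildephi}. The only real content to verify is that differentiation commutes with the (possibly infinite) sum over $k \in S(x,x')$ at $\tau=0$; once that is in hand, the conclusion is immediate.

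First I would invoke Corollary \ref{cor-termbyterm}, which is precisely designed for this situation: under the conditions of Theorem \ref{thm_Ass5} together with Assumption 1, term by term differentiation of \eqref{eq_phi_tildephi} up to order $q+1$ is legitimate for both $\phi$ and $P$. Applying this for any fixed $x,x' \in \integ^N$ and any $i \in \{1,\dots,q\}$ gives
\begin{equation*}
\phi^{(i)}(\tau,x,x') = \sum_{k \in S(x,x')} \tilde{\phi}^{(i)}(\tau,x;k), \qquad P^{(i)}(\tau,x,x') = \sum_{k \in S(x,x')} \tilde{p}^{(i)}(\tau,x;k),
\end{equation*}
valid in particular at $\tau = 0$.

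Next I would evaluate at $\tau=0$ and invoke the two hypotheses on $\tilde{\phi}$ and $\tilde{p}$: by \eqref{eq-tildepi} we have $\tilde{p}^{(i)}(0,x;k) = \tilde{Q}^i(x;0,k)$ for every $k \in \posint^M$, and by the pointwise consistency assumption \eqref{eq-tilde-consist} we have $\tilde{\phi}^{(i)}(0,x;k) = \tilde{Q}^i(x;0,k)$ for every $k \in \posint^M$ and $i=1,\dots,q$. Hence $\tilde{\phi}^{(i)}(0,x;k) = \tilde{p}^{(i)}(0,x;k)$ termwise, so
\begin{equation*}
\phi^{(i)}(0,x,x') = \sum_{k \in S(x,x')} \tilde{\phi}^{(i)}(0,x;k) = \sum_{k \in S(x,x')} \tilde{p}^{(i)}(0,x;k) = P^{(i)}(0,x,x'),
\end{equation*}
which is exactly \eqref{eq_ptwse_cons}. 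The $q+1$ times continuous differentiability of $\phi(\tau,x,x')$ in $\tau$ on $[0,\delta]$, also required by Assumption 3, follows from the uniform convergence of the differentiated series over $\tau \in [0,\delta]$ that was established in the proof of Theorem \ref{thm_Ass5}.

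The main obstacle is really just the interchange of sum and derivative at $\tau=0$, since $S(x,x')$ can be infinite when $M > N$ or the stoichiometric vectors admit nontrivial integer relations; however, this obstacle has already been handled by Corollary \ref{cor-termbyterm}, so the theorem reduces to a bookkeeping step. No new estimates are needed.
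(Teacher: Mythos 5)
Your proof is correct and follows exactly the route the paper intends: the paper declares the theorem ``immediate'' precisely because Corollary \ref{cor-termbyterm} supplies the term-by-term differentiation of \eqref{eq_phi_tildephi}, after which evaluating at $\tau=0$ and matching \eqref{eq-tilde-consist} against \eqref{eq-tildepi} gives \eqref{eq_ptwse_cons}. Your additional remark that the required $q+1$-fold continuous differentiability of $\phi$ comes from the uniform convergence established in the proof of Theorem \ref{thm_Ass5} is the right way to close that loose end.
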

%\begin{proof}
%By the proof of Theorem \ref{thm_Ass5} we know that \eqref{eq_phi_tildephi} can be differentiated term by term to show that $\phi^{%(1)}(0,x,x')=a_j(x)$ for 
%$x'=x+\nu_j$, $\phi^{(1)}(0,x,x') = -a_0(x)$ for $x'=x$ and for other choices of $x'$ we have $\phi^{(1)}(0,x,x')=0$ showing first %order pointwise consistency.
%\end{proof}

The Assumption $2$ involves the moment growth bound condition on the chemical process. 
Verifying these conditions may not be trivial. Some sufficient conditions for Assumption 2 may be 
found in \cite{rathinam-QAM-14, Engblom-14, Gupta-Briat+PLOS14}. We provide one result which follows from 
Theorem 3.6 of \cite{rathinam-QAM-14}. 

We shall say that a reaction channel $j$ is {\em linearly bounded} if there exists a
constant $H$ such that
\[
a_j(x) \leq H (1+|x|), \quad \forall x \in \posint^N.
\]
If a reaction channel is not linearly bounded we refer to it as {\em
  superlinear}. Let us denote by $M_s$ the number of superlinear reactions. In
what follows we assume without loss of generality that the reactions are
ordered such that the first $M_s$ are superlinear.

While our convergence analysis of Section 3 did not assume that the non-negative lattice $\posint^N$ was invariant for the process, 
the sufficient condition we provide here for Assumption 2  will only apply to systems that remain 
in $\posint^N$ when started in $\posint^N$. Such a process is said to be {\em
  conservative} with respect to $\posint^N$. Any realistic model of chemical
kinetics as well as other population processes must have this property. 
It is easy to see that the process $X$ is conservative with respect to 
$\posint^N$ if and only if for every $x \in \posint^N$ if $x + \nu_j \notin \posint^N$ then $a_j(x)=0$. 

\begin{theorem}
Suppose that $X$ is conservative with respect to $\posint^N$, Assumption 1 is
satisfied and that there exists $\alpha \in \posint^{N}$ such that $\alpha>0$ and 
 $\alpha^T \nu_j \leq 0$ for $j=1,\dots,M_s$. Assume $X(0) \in \posint^N$ with probability $1$. Then for each $r \in \nat$ there 
exists $\lambda_r$ such that the following holds for all $t \geq 0$ and in any norm $|.|$ on $\real^N$:
\[
E(|X(t)|^r) \leq E(|X(0)|^r) e^{\lambda_r t} + e^{\lambda_r t} - 1.
\]
\label{thm_ass2}
\end{theorem}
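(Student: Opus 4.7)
The goal is to construct a linear Lyapunov function out of the vector $\alpha$, derive a Foster--Lyapunov inequality for its $r$th power, and extract the desired moment bound via Dynkin's formula and Gronwall. This is essentially the argument of Theorem 3.6 of \cite{rathinam-QAM-14}, which I would cite directly; what follows is a sketch of its structure.

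By Remark \ref{rem_equiv_norm} it suffices to prove the estimate for one specific norm on $\real^N$, since a change of norms only alters the constant $\lambda_r$. Because $X$ is conservative with respect to $\posint^N$ and $X(0)\in\posint^N$ almost surely, the process stays in $\posint^N$ for all time, so I may work with $V(x) := \alpha^{T}x$. Since $\alpha>0$ componentwise and integer, $V$ is equivalent (up to constants) to any fixed norm on $\real^N$ restricted to $\posint^N$; it thus suffices to prove the bound for $V(x)^r$ in place of $|x|^r$ and absorb the equivalence constants into $\lambda_r$ at the end.

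Next I compute the action of the generator on $V^r$: for $x\in\posint^N$,
\[
(Q V^r)(x) = \sum_{j=1}^{M} a_j(x)\bigl[(V(x) + \alpha^{T}\nu_j)^{r} - V(x)^{r}\bigr].
\]
For a superlinear channel $j \leq M_s$, the hypothesis $\alpha^{T}\nu_j \leq 0$ together with conservativeness (so that either $a_j(x)=0$ or $V(x+\nu_j) = V(x) + \alpha^T\nu_j \geq 0$) implies $(V(x)+\alpha^{T}\nu_j)^{r} - V(x)^{r} \leq 0$ for $r\geq 1$; hence this contribution is non-positive. For a linearly bounded channel $j > M_s$, the estimate $a_j(x) \leq H(1+V(x))$ follows from linear boundedness and the equivalence of $V$ with $|\cdot|$, while a binomial expansion represents $(V(x)+\alpha^{T}\nu_j)^{r} - V(x)^{r}$ as a polynomial of degree $r-1$ in $V(x)$. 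Multiplying and summing yields the Foster--Lyapunov inequality
\[
(Q V^r)(x) \leq \lambda_r\bigl(1 + V(x)^{r}\bigr)
\]
for some $\lambda_r > 0$.

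The remaining step is to transfer this pointwise generator inequality into a bound on $E[V(X(t))^r]$. Using the stopping times $\tau_n = \inf\{t : V(X(t)) \geq n\}$, Dynkin's formula applied to the bounded function $V^r$ stopped at $\tau_n$ gives
\[
E[V(X(t\wedge\tau_n))^r] \leq E[V(X(0))^r] + \lambda_r \int_0^t \bigl(1 + E[V(X(s\wedge\tau_n))^r]\bigr)\, ds,
\]
so Gronwall's inequality yields $1 + E[V(X(t\wedge\tau_n))^r] \leq (1 + E[V(X(0))^r])\, e^{\lambda_r t}$. Letting $n\to\infty$ via Fatou and rewriting in the original norm (absorbing the norm-equivalence constants into $\lambda_r$) delivers the claimed bound. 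The main technical subtlety is the localization limit $n\to\infty$: it relies on nonexplosivity of $X$, which itself follows from the same Lyapunov estimate applied with $r=1$. This simultaneous bootstrap (nonexplosivity first, then higher-order moments) is precisely what is carried out rigorously in \cite{rathinam-QAM-14}, so the cleanest presentation is to simply invoke that theorem once the Foster--Lyapunov inequality above is verified from the hypotheses.
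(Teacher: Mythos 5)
Your proposal follows essentially the same route as the paper: the paper's entire proof of Theorem \ref{thm_ass2} is the single sentence ``This is implied by the proof of Theorem 3.6 of \cite{rathinam-QAM-14},'' and your sketch is a faithful reconstruction of what that cited argument does (linear Lyapunov function $\alpha^{T}x$, sign argument for the superlinear channels using conservativeness, degree-$(r-1)$ expansion for the linearly bounded channels, Dynkin--Gronwall--localization). The generator computation, the Foster--Lyapunov inequality, and the localization/Fatou step are all sound for the weighted norm $|x|=\sum_i \alpha_i|x_i|$, which coincides with $\alpha^{T}x$ on $\posint^N$.

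The one step you should not wave through is the opening reduction ``it suffices to prove the estimate for one specific norm, since a change of norms only alters $\lambda_r$,'' justified by Remark \ref{rem_equiv_norm}. Norm equivalence gives $c_1|x|\le V(x)\le c_2|x|$ on $\posint^N$ and hence only
\[
1+E\bigl(|X(t)|^r\bigr)\;\le\;1+c_1^{-r}\Bigl[\bigl(1+c_2^{\,r}E(|X(0)|^r)\bigr)e^{\lambda_r t}-1\Bigr],
\]
and the resulting multiplicative prefactor $(c_2/c_1)^r\ge 1$ cannot be absorbed into $e^{\lambda_r' t}$ uniformly as $t\to 0^{+}$, since at $t=0$ the target bound is an equality. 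The difficulty is not merely cosmetic: the pointwise differential inequality $\tfrac{d}{dt}E(|X(t)|^r)\big|_{t=0}\le\lambda_r(1+|x|^r)$ can genuinely fail, uniformly in $x$, in a norm for which a superlinear channel with $\alpha^{T}\nu_j\le 0$ nevertheless increases $|x|$ (e.g.\ $\nu_j=(-2,1)^T$ in the supremum norm), so the transfer to an arbitrary norm is not a two-line corollary of equivalence. This is precisely why the paper handles norm changes through a separate result (Lemma 3.5 of \cite{rathinam-QAM-14}, invoked again in the proof of Theorem \ref{thm_ass6}) and why Remark \ref{rem_Ass_norm_indep} is careful to say only that the bound ``may be shown to hold in any norm'' under suitable conditions. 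Since you ultimately defer to \cite{rathinam-QAM-14} anyway, the clean fix is to defer the norm-transfer step to that reference as well rather than to Remark \ref{rem_equiv_norm}.
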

\begin{proof}
This is implied by the proof of Theorem 3.6 of \cite{rathinam-QAM-14}.   
\end{proof}

For $x \in \posint^N$, $l \in \posint$ and $\tau>0$ let us define $m_l(x,\tau)$ to be the $l$th moment of the vector copy number of the 
linearly bounded reactions over a time step $\tau$ starting with state $x$
according to the tau leap method:
\begin{equation}
m_l(x,\tau) = \sum_{k} |k^{(2)}|^l \tilde{\phi}(\tau,x;k).
\label{eq_ml}
\end{equation}
Here vector copy number of reaction counts $k$ is written as
$k=(k^{(1)},k^{(2)}) \in \posint^{M_s}\times \posint^{M-M_s}$ where $k^{(1)}$
is the vector copy number of superlinear reactions and $k^{(2)}$ is that of
linearly bounded ones. We note that $m_0=1$. 
  
The following theorem provides sufficient conditions that guarantee Assumption 6.

\begin{theorem}\label{thm_ass6}
Suppose that there exists $\alpha$ satisfying the hypotheses of Theorem
\ref{thm_ass2}. Suppose
further that for each $l \in \nat$ there exist $\beta_l >0, \tilde{\delta_l}>0$
such that for all $x \in \posint^{N}$ and $\tau \in [0,\tilde{\delta_l}]$,  
\begin{equation}
m_l \leq \beta_l (1 + |x|^l) \tau,
\label{eq_ml_bnd}
\end{equation}
and for $x \notin \posint^{N}$ suppose that $\tilde{\phi}(\tau,x;0)=1$
(i.e.\ $K=0$  with probability $1$) which means that if the tau leap scheme
leaves $\posint^N$ it is stopped. Furthermore suppose that if $x \in \posint^N$
and for $k=(k^{(1)},k^{(2)})$ if $x + \nu^{(1)} k^{(1)} \notin \posint^N$ then
  $\tilde{\phi}(\tau,x;k)=0$. (This means if $x \in \posint^N$ then the tau
  update of the superlinear reactions alone will still result in a state in
  $\posint^N$ with probability $1$).  
Then Assumption 6 holds in a particular norm. If in addition the conditions of Theorem \ref{thm_Ass5} hold then
Assumption 6 holds in any norm.  
\end{theorem}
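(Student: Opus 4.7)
The plan is to estimate $E[1+|Y(t+\tau)|^r\mid Y(t)=x]$ directly and absorb the correction into an exponential factor, using two structural facts supplied by the hypotheses: the sign condition $\alpha^T\nu_j^{(1)}\leq 0$ makes superlinear reactions non-expansive in the weighted norm $|y|_\alpha:=\sum_i\alpha_i|y_i|$, while the $m_l$ hypothesis forces only $O(\tau)$ contributions from the linearly bounded reactions.

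First I will dispose of $x\notin\posint^N$ trivially: the hypothesis $\tilde{\phi}(\tau,x;0)=1$ freezes $Y$ at $x$, so both sides of the inequality equal $1+|x|^r$. For $x\in\posint^N$, I will decompose $Y(t+\tau)=y_1+\nu^{(2)}K^{(2)}$ where $y_1:=x+\nu^{(1)}K^{(1)}\in\posint^N$ almost surely by the superlinear-preservation hypothesis. Since $\alpha>0$ and $y_1\in\posint^N$,
\[
|y_1|_\alpha=\alpha^T y_1=|x|_\alpha+\sum_{j=1}^{M_s}(\alpha^T\nu_j^{(1)})K_j^{(1)}\leq|x|_\alpha,
\]
using $\alpha^T\nu_j^{(1)}\leq 0$ and $K_j^{(1)}\geq 0$. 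The triangle inequality then yields $|Y|_\alpha\leq|x|_\alpha+C|K^{(2)}|$ with $C:=\max_j|\nu_j^{(2)}|_\alpha$.

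Next I will expand $|Y|_\alpha^r$ by the binomial theorem, take conditional expectation, and invoke $m_l(x,\tau)\leq\beta_l(1+|x|^l)\tau$ for $l\geq 1$ together with $m_0=1$. Norm equivalence on $\real^N$ shows each term $|x|_\alpha^{r-l}(1+|x|^l)$ is a polynomial in $|x|_\alpha$ of degree $r$ and hence dominated by a constant multiple of $1+|x|_\alpha^r$, giving $E[|Y|_\alpha^r\mid x]\leq|x|_\alpha^r+D_r(1+|x|_\alpha^r)\tau$ for a suitable $D_r>0$. Adding $1$ and using $1+D_r\tau\leq e^{D_r\tau}$ establishes Assumption 6 in the norm $|\cdot|_\alpha$ with $\lambda_r=D_r$ and $\delta_r=\min_{l\leq r}\tilde\delta_l$.

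Finally, for the ``any norm'' statement under the added hypotheses of Theorem \ref{thm_Ass5}: the mean value theorem applied to $\tilde{\phi}(\cdot,x;k)$ (which starts at $\delta_{k,0}$) together with the uniform bound $\sum_k|k|^r\mu_{k,1}(x)\leq\eta_{r,1}(1+|x|^{\sigma_{r,1}})$ yields $E[|K|^r\mid x,\tau]\leq\eta_{r,1}(1+|x|^{\sigma_{r,1}})\tau$ in any chosen norm, and the $m_l$ hypothesis transfers across norms by equivalence. Combining these with the deterministic bound $|K^{(1)}|\leq\text{const}\cdot(1+|x|)$ forced by $\alpha^T y_1\geq 0$ together with $\alpha^T\nu_j^{(1)}\leq 0$, I will replay the previous argument in an arbitrary norm via the split $|Y|'\leq|x|'+C'|K^{(1)}|+C''|K^{(2)}|$. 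The principal obstacle is that in a generic norm the sharp contraction $|y_1|_\alpha\leq|x|_\alpha$ is no longer at hand; keeping the resulting polynomial in $|x|$ of degree at most $r$ must then lean on both Theorem \ref{thm_Ass5}'s moment control and the $\alpha$-driven deterministic bound on $K^{(1)}$ rather than a naive triangle estimate.
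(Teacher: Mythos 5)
Your treatment of the first assertion (Assumption 6 in the particular norm $|y|_\alpha=\sum_i\alpha_i|y_i|$) is correct and is essentially the paper's own argument: the same split $x+\nu k=(x+\nu^{(1)}k^{(1)})+\nu^{(2)}k^{(2)}$, the same observation that $\alpha^T\nu_j\le 0$ and $x+\nu^{(1)}k^{(1)}\in\posint^N$ give $|x+\nu^{(1)}k^{(1)}|_\alpha\le|x|_\alpha$, the same binomial expansion against the $m_l$ bounds, and $1+D_r\tau\le e^{D_r\tau}$.

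The ``any norm'' part, however, has a genuine gap, and it is precisely at the point you flag as the principal obstacle. Two specific problems. First, the deterministic bound $|K^{(1)}|\le\mathrm{const}\cdot(1+|x|)$ does not follow from the hypotheses: the condition is $\alpha^T\nu_j^{(1)}\le 0$, not $<0$, so $0\le\alpha^T(x+\nu^{(1)}K^{(1)})\le\alpha^Tx$ only controls $\sum_j|\alpha^T\nu_j^{(1)}|K_j^{(1)}$ and says nothing about those $K_j^{(1)}$ with $\alpha^T\nu_j^{(1)}=0$ (e.g.\ two superlinear channels with $\nu_1^{(1)}=-\nu_2^{(1)}$ can both fire arbitrarily often while keeping the state in $\posint^N$). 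Second, even granting some moment control on $K^{(1)}$, the bounds available from the hypotheses of Theorem \ref{thm_Ass5} have the form $\eta_{l,1}(1+|x|^{\sigma_{l,1}})\tau$ with $\sigma_{l,1}$ possibly exceeding $l$, so the cross terms $|x|'^{\,r-l}E(|K^{(1)}|^l)$ can have degree $>r$ in $|x|$; and any argument that passes from the $\alpha$-norm to a generic norm by equivalence picks up a multiplicative constant $c\ge 1$ at $\tau=0$, which is incompatible with Assumption 6's required form $(1+|x|^r)e^{\lambda_r\tau}$ (exact equality at $\tau=0$). The paper avoids all of this by a different mechanism: under the conditions of Theorem \ref{thm_Ass5} the map $\tau\mapsto\sum_{x'}(1+|x'|^r)\phi(\tau,x,x')$ is differentiable (term-by-term differentiation is justified), and it then invokes Lemma 3.5 of \cite{rathinam-QAM-14}, which upgrades a moment-growth bound established in one norm to an arbitrary norm via a derivative-at-zero / differential-inequality argument rather than a triangle estimate. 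You would need an argument of that type (using that the bound is an equality at $\tau=0$ and controlling the $\tau$-derivative) to close this part; the route you sketch does not.
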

\begin{proof}
Define the norm on $\real^N$ by $|x| = \sum_{i=1}^N \alpha_i |x|_i$. 
Then $|x+\nu_j| \leq |x|$ if $x \in \posint^N$ and 
$x + \nu_j \in \posint^N$ for $j=1,\dots,M_s$. We denote by $\nu^{(1)}$ the 
$N \times M_s$ sub-matrix consisting of superlinear reactions and by $\nu^{(2)}$ 
the $N \times (M-M_s)$ sub-matrix consisting of linearly bounded reactions. 
Then we have that for $\tilde{\phi}(\tau,x;k) \neq 0$ with $k=(k^{(1)},k^{(2)})$,
\[
|x+\nu k| = |x + \nu^{(1)}k^{(1)} + \nu^{(2)} k^{(2)}| \leq |x + \nu^{(1)}k^{(1)}| + |\nu^{(2)} k^{(2)}| \leq |x| + \|\nu^{(2)}\| |k^{(2)}|,
\]
where $\|\nu^{(2)}\|$ is the induced norm of $\nu^{(2)}$. Using this we get 
\[
\begin{aligned}
&\sum_{x'} |x'|^r \phi(\tau,x,x') = \sum_{k} |x+\nu k|^r \tilde{\phi}(\tau,x;k) \\
&\leq \sum_{k} \left(|x| + \|\nu^{(2)}\| |k^{(2)}|\right)^r \tilde{\phi}(\tau,x;k) \leq \sum_{l=0}^r \frac{r!}{l!(r-l)!} |x|^{r-l} \|\nu^{(2)}\|^l m_l(x,\tau).
\end{aligned}
\]
Using the bounds on $m_l$ we obtain that for suitably large $\lambda_r$ and suitably small $\delta_r>0$ we have
\[
\sum_{x'} (1+|x'|^r) \phi(\tau,x,x') \leq (1+|x|^r) (1 + \lambda_r \tau) \leq (1+|x|^r)e^{\lambda_r \tau},
\]
for all $\tau \in [0,\delta_r]$. This shows that Assumption 6 holds in the
particular norm defined above.

If in addition the conditions of Theorem \ref{thm_Ass5} hold then 
\[
\sum_{x'} (1+|x'|^r) \phi(\tau,x,x')
\]
is differentiable in $\tau$ and by Lemma 3.5 of \cite{rathinam-QAM-14} the 
Assumption 6 holds in any norm. 
\end{proof}

\begin{remark}\label{rem-ass6}
We note that proof of Theorem \ref{thm_ass6} uses an approach similar  
to that of Theorem \ref{thm_ass2} (see \cite{rathinam-QAM-14}) in that it 
is required that the reactions that have superlinear propensities 
are expected to decrease the norm of the state (in some norm). Since the 
original process remains non-negative the existence of 
$\alpha \in \posint^{N}$ such that $\alpha>0$ and  $\alpha^T \nu_j \leq 0$ 
for $j=1,\dots,M_s$ is adequate to ensure this. However in the case of 
a tau leap method we directly require that 
the superlinear reactions alone shall not result in a non-negative state in 
order to accomplish this. Thus it will be advisable to use bounded random 
variables such as Binomials for superlinear reactions to ensure
non-negativity. 
\end{remark}  

\subsection{Tau leap methods with Poisson and binomial updates}\label{sec-Poiss-bino}
Most tau leap methods use Poisson or binomial random variables for the $K_j$.
In this subsection we present further results that apply specifically to tau leap methods that use Poisson and binomial random variables. 

We first state some lemmas related to Poisson and binomial random variables. 
\begin{lemma}
\label{lem-poiss-poly-mom}
Let $K$ be Poisson distributed with parameter $\lambda$. Then for each $r \in
\posint$ the moment $E(K^r)$ is a polynomial in $\lambda$ of degree $r$. 
\end{lemma}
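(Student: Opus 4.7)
The plan is to compute $E(K^r)$ explicitly by decomposing the ordinary power $K^r$ into a sum of falling factorials, for which the Poisson moments are especially simple.

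First I would recall the standard identity for the factorial moments of a Poisson random variable: for each $j \in \posint$,
\[
E\bigl( K(K-1)(K-2)\cdots(K-j+1) \bigr) = \lambda^j.
\]
This is immediate from the series expansion $E(K^{\underline{j}}) = \sum_{k \geq j} k(k-1)\cdots(k-j+1) e^{-\lambda}\lambda^k/k! = \lambda^j e^{-\lambda}\sum_{m \geq 0} \lambda^m/m! = \lambda^j$.

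Next I would use the classical identity expressing ordinary powers in terms of falling factorials via the Stirling numbers of the second kind $S(r,j)$:
\[
K^r = \sum_{j=0}^r S(r,j)\, K(K-1)\cdots(K-j+1),
\]
with $S(r,r)=1$. Taking expectations term by term and applying the factorial moment identity yields
\[
E(K^r) = \sum_{j=0}^r S(r,j)\, \lambda^j,
\]
which is manifestly a polynomial in $\lambda$ with leading term $\lambda^r$, hence of degree exactly $r$.

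There is essentially no obstacle here; the only thing to verify is the falling factorial identity, which follows by a short induction on $r$ (or by citing it as a standard combinatorial identity). An alternative, equally short route is induction on $r$ using the Poisson identity $E(K f(K)) = \lambda E(f(K+1))$, which gives $E(K^{r+1}) = \lambda \sum_{j=0}^r \binom{r}{j} E(K^j)$ and advances the induction immediately. Either approach suffices and the proof is just a few lines.
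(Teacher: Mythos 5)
Your proposal is correct. Your primary route differs from the paper's: the paper simply establishes the recursion $E(K^r) = \lambda\, E\bigl((K+1)^{r-1}\bigr)$ and inducts on $r$, whereas you derive the explicit closed form $E(K^r) = \sum_{j=0}^{r} S(r,j)\,\lambda^{j}$ via factorial moments and Stirling numbers of the second kind. Both arguments are sound; the explicit formula buys slightly more than the lemma asks for (it exhibits the leading coefficient $S(r,r)=1$, so the degree is \emph{exactly} $r$, and identifies $E(K^r)$ as the Touchard polynomial), at the cost of invoking the power-to-falling-factorial identity. Your alternative route at the end --- inducting on $r$ via $E(Kf(K))=\lambda E(f(K+1))$ --- is essentially the paper's proof verbatim. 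Either way the term-by-term expectation is justified since the sums are finite and all Poisson moments exist, so there is no gap.
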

\begin{proof}
This follows via induction using the easy to establish recursion
\[
E(K^r) = \lambda E( (K+1)^{r-1}).
\]
\end{proof}

\begin{lemma}
\label{lem-bino-poly-mom}
Let $K$ be binomially distributed with parameters $N$ and $p$. Then for each
$r \in \posint$ the moment $E(K^r)$ is a polynomial of degree $r$ separately in
$N$ and $p$. 
\end{lemma}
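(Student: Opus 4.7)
The plan is to express $E(K^r)$ through the falling factorial moments of $K$, which are particularly clean for the binomial distribution. Recall that for $K \sim \text{Binomial}(N,p)$ the $j$th falling factorial moment is
\[
E[K(K-1)(K-2)\cdots(K-j+1)] = N(N-1)\cdots(N-j+1)\, p^j,
\]
which one can verify by a direct calculation with the binomial probability mass function (the factor $\binom{N}{k}k(k-1)\cdots(k-j+1)$ telescopes to $N(N-1)\cdots(N-j+1)\binom{N-j}{k-j}$ and the remaining sum is $1$). This right hand side is visibly a polynomial of degree exactly $j$ in $N$ and of degree exactly $j$ in $p$.

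Next I would invoke the standard identity expressing ordinary powers as linear combinations of falling factorials via Stirling numbers of the second kind:
\[
k^r = \sum_{j=0}^{r} S(r,j)\, k(k-1)\cdots(k-j+1).
\]
Taking expectations and using the formula from the first step gives
\[
E(K^r) = \sum_{j=0}^{r} S(r,j)\, N(N-1)\cdots(N-j+1)\, p^j.
\]
Each summand is a polynomial in $N$ of degree $j$ and a polynomial in $p$ of degree $j$. Since $S(r,r)=1$, the $j=r$ term contributes a leading $N^r p^r$ that is not cancelled by any other term, so the total degree in each variable is exactly $r$, establishing the claim.

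No single step is really an obstacle here. If the Stirling number machinery is deemed unnecessary, an equivalent and perhaps more elementary route is to write $K = X_1+\cdots+X_N$ with $X_i$ i.i.d.\ Bernoulli$(p)$, expand
\[
E(K^r) = \sum_{i_1,\ldots,i_r=1}^{N} E(X_{i_1}\cdots X_{i_r}) = \sum_{j=1}^{r} S(r,j)\, N(N-1)\cdots(N-j+1)\, p^j,
\]
using the fact that $E(X_{i_1}\cdots X_{i_r})=p^j$ where $j$ is the number of distinct indices in the tuple, and counting ordered $r$-tuples drawn from $\{1,\ldots,N\}$ with exactly $j$ distinct entries. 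Either derivation closes the proof in a few lines. Alternatively, one could mimic the Poisson proof and argue by induction on $r$ using the recursion $E(K^r) = N p\, E((K'+1)^{r-1})$, where $K' \sim \text{Binomial}(N-1,p)$, which shifts $N \to N-1$ and peels off one factor of $p$; this inductive route is the cleanest analogue of Lemma~\ref{lem-poiss-poly-mom} but requires tracking both parameters simultaneously.
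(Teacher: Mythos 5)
Your argument is correct, but your primary route differs from the paper's. The paper proves the lemma exactly the way you sketch in your final sentence: by induction on $r$ using the recursion $E(K_N^r) = Np\,E\bigl((1+K_{N-1})^{r-1}\bigr)$, in direct parallel with its proof of Lemma~\ref{lem-poiss-poly-mom}. Your main derivation instead produces the closed form
\[
E(K^r) = \sum_{j=0}^{r} S(r,j)\, N(N-1)\cdots(N-j+1)\, p^j
\]
via falling factorial moments and Stirling numbers of the second kind. Both are valid; the computation $E[K(K-1)\cdots(K-j+1)] = N(N-1)\cdots(N-j+1)p^j$ is standard and your telescoping justification is sound. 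What your route buys is an explicit formula that makes the degree count transparent -- in particular your observation that the $j=r$ term contributes an uncancelled $N^r p^r$ pins down that the degree is exactly $r$ in each variable separately, a point the paper's two-line inductive proof leaves implicit (the recursion shifts $N$ to $N-1$, so one must note that this substitution preserves degree in $N$, as you remark). What the paper's route buys is brevity and uniformity with the Poisson case, which is all that is needed since the lemma is only ever invoked to get polynomial upper bounds in Lemmas~\ref{lem-bino-deriv-bnd} and~\ref{lem-bino-deriv}.
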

\begin{proof}
This follows via induction using the easy to establish recursive relation
\[
E(K_N^r) = N p E((1+K_{N-1})^{r-1}),
\]
where $K_N$ denotes a binomial random variable with parameters $N$ and $p$. 
\end{proof}

\begin{lemma}
\label{lem-poiss-deriv-bnd}
Let $K$ be Poisson distributed with parameter $\lambda$ where $\lambda=\lambda(x,\tau)$
is a function of state $x \in \posint^N$ and step size $\tau \geq 0$. Denote $\psi(\lambda,k)$ the probability
that $K=k$. Suppose that there exists $\delta>0$ such that for all $x \in
\posint^N$ and $\tau \in [0,\delta]$, $\lambda$ is $q+1$ times continuously differentiable
in $\tau$, and the supremum of $\lambda,|\lambda^{(1)}|,\dots,|\lambda^{(q+1)}|$ over
$\tau \in [0,\delta]$ is bounded above by a polynomial in $|x|$. Then for each
$r \in \posint$ and $i=0,1,\dots,q+1$, the supremum of 
\[
\sum_k k^r |\psi^{(i)}(\lambda,k)|
\]
over $\tau \in [0,\delta]$ is bounded above by a polynomial in $|x|$. 
\end{lemma}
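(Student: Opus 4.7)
The plan is to reduce everything to two ingredients already available: the pointwise derivatives $\partial_\lambda^m \psi(\lambda,k)$ can be written explicitly via the Leibniz rule, and Poisson moments are polynomials in $\lambda$ by Lemma \ref{lem-poiss-poly-mom}.

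First I would apply Fa\`a di Bruno's formula to the composition $\tau \mapsto \psi(\lambda(\tau),k)$ to obtain, for $i=0,1,\dots,q+1$, a representation of the form
\[
\psi^{(i)}(\lambda(\tau),k) \;=\; \sum_{m=0}^{i} \bigl(\partial_\lambda^m \psi\bigr)(\lambda(\tau),k) \cdot P_{i,m}\bigl(\lambda^{(1)}(\tau),\dots,\lambda^{(i)}(\tau)\bigr),
\]
where each $P_{i,m}$ is a fixed (universal) polynomial of degree at most $i$ in the listed derivatives. By the hypothesis on $\lambda$, each $|\lambda^{(j)}(\tau)|$ is bounded on $[0,\delta]$ by a polynomial in $|x|$, so each $|P_{i,m}(\cdots)|$ is bounded on $[0,\delta]$ by a polynomial in $|x|$.

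Next I would evaluate $\partial_\lambda^m \psi(\lambda,k)$ for the Poisson pmf $\psi(\lambda,k)=e^{-\lambda}\lambda^k/k!$ by the Leibniz rule:
\[
\partial_\lambda^m \psi(\lambda,k) \;=\; \sum_{j=0}^{\min(m,k)} \binom{m}{j}(-1)^{m-j}\,\frac{e^{-\lambda}\lambda^{k-j}}{(k-j)!} \;=\; \sum_{j=0}^{\min(m,k)} \binom{m}{j}(-1)^{m-j}\psi(\lambda,k-j).
\]
Taking absolute values, multiplying by $k^r$, summing over $k$, interchanging the finite outer sum with the series in $k$, and reindexing $l=k-j$ gives
\[
\sum_{k=0}^{\infty} k^r \,\bigl|\partial_\lambda^m \psi(\lambda,k)\bigr| \;\leq\; \sum_{j=0}^{m}\binom{m}{j}\sum_{l=0}^{\infty}(l+j)^r \psi(\lambda,l) \;=\; \sum_{j=0}^{m}\binom{m}{j}\,E\bigl[(K_\lambda+j)^r\bigr],
\]
where $K_\lambda$ is Poisson$(\lambda)$. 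Expanding $(K_\lambda+j)^r$ by the binomial theorem and invoking Lemma \ref{lem-poiss-poly-mom}, each $E[(K_\lambda+j)^r]$ is a polynomial of degree $r$ in $\lambda$. Hence the whole sum is bounded by a polynomial in $\lambda$ of degree $r$, and since $\lambda \leq Q(|x|)$ for some polynomial $Q$ on $[0,\delta]$, it is bounded by a polynomial in $|x|$.

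Finally I would combine the two estimates: the triangle inequality applied to the Fa\`a di Bruno expansion gives
\[
\sum_{k=0}^{\infty} k^r \,\bigl|\psi^{(i)}(\lambda(\tau),k)\bigr| \;\leq\; \sum_{m=0}^{i}\Bigl(\sum_{k=0}^{\infty} k^r \bigl|\partial_\lambda^m \psi(\lambda(\tau),k)\bigr|\Bigr)\, \bigl|P_{i,m}(\lambda^{(1)},\dots,\lambda^{(i)})\bigr|,
\]
which is a finite sum of products of polynomials in $|x|$, hence a polynomial in $|x|$, uniformly in $\tau \in [0,\delta]$. Nothing here is subtle; the main bookkeeping obstacle is verifying that the interchange of summation in the $\partial_\lambda^m$ step is legitimate, but this is immediate because for each fixed $\lambda$ the series $\sum_k k^r \psi(\lambda,k)$ converges absolutely (it is a Poisson moment), which provides the required dominating majorant.
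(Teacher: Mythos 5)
Your proposal is correct and follows essentially the same route as the paper: the paper iterates the relation $\psi^{(1)}(\lambda,k)=\lambda^{(1)}(\psi(\lambda,k-1)-\psi(\lambda,k))$ to express $\psi^{(i)}$ through shifted Poisson probabilities multiplied by products of derivatives of $\lambda$, then bounds the resulting sums by Poisson moments and invokes Lemma \ref{lem-poiss-poly-mom}, which is exactly what your Fa\`a di Bruno/Leibniz decomposition makes explicit. Your version merely fills in the bookkeeping (including the justification of the summation interchange) that the paper leaves as ``by repeated application.''
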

\begin{proof}
It is straight forward to verify the relation
\[
\psi^{(1)}(\lambda,k) = \lambda^{(1)} \left(\psi(\lambda,k-1)-\psi(\lambda,k) \right), \;\; k \in \posint,
\] 
where the convention that $\psi(\lambda,k) = 0$ for $k <0$ is used.  
By repeated application one can relate $\psi^{(i)}$ for $i=2,\dots,q+1$ also to $\psi$. This provides an upper bound for the quantities of interest in terms of the moments. Then the 
result follows by Lemma \ref{lem-poiss-poly-mom}.
\end{proof}

\begin{lemma}
\label{lem-bino-deriv-bnd}
Let $K$ be binomially distributed with parameters $N_0$ and $p$ where $N_0=N_0(x)$
is a function of state $x \in \posint^N$ and $p=p(x,\tau)$ is a function of
state $x$ and step size $\tau \geq 0$. Denote $\psi(N_0,p,k)$ the probability
that $K=k$. Suppose that there exists $\delta>0$ such that for all $x \in
\posint^N$ and $\tau \in [0,\delta]$, $p$ is $q+1$ times continuously differentiable
in $\tau$, and $N_0(x)$ as well as the supremum of $p,|p^{(1)}|,\dots,|p^{(q+1)}|$ over
$\tau \in [0,\delta]$ are bounded above by a polynomial in $|x|$. Then for each
$r \in \posint$ and $i=0,1,\dots,q+1$, the supremum of 
\[
\sum_k k^r |\psi^{(i)}(N_0,p,k)|
\]
over $\tau \in [0,\delta]$ is bounded above by a polynomial in $|x|$. 
\end{lemma}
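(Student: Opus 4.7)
The plan is to mimic the proof of Lemma \ref{lem-poiss-deriv-bnd}, replacing the Poisson shift recursion with the corresponding recursion for binomial pmfs. The only $\tau$-dependence of $\psi(N_0,p,k)$ is through $p=p(x,\tau)$, so Fa\`a di Bruno's formula will express $\psi^{(i)}(\tau)$ in terms of partial derivatives $\partial^j \psi / \partial p^j$ for $j=1,\dots,i$ multiplied by products of derivatives $p^{(l_1)},\dots,p^{(l_m)}$ of $p$. Each of these products is, by hypothesis, uniformly bounded on $[0,\delta]$ by a polynomial in $|x|$, so the whole task reduces to controlling the quantities $\sum_k k^r |\partial^j \psi(N_0,p,k)/\partial p^j|$ by a polynomial in $|x|$.

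For that, the key identity is the shift formula
\[
\frac{\partial \psi(N_0,p,k)}{\partial p} \;=\; N_0\bigl[\psi(N_0-1,p,k-1) - \psi(N_0-1,p,k)\bigr],
\]
obtained by differentiating $\binom{N_0}{k}p^k(1-p)^{N_0-k}$ and using the elementary relations $k\binom{N_0}{k}=N_0\binom{N_0-1}{k-1}$ and $(N_0-k)\binom{N_0}{k}=N_0\binom{N_0-1}{k}$, with the convention $\psi(\cdot,\cdot,k)=0$ for $k<0$ or $k>N_0$. Iterating this identity $j$ times yields a clean closed form,
\[
\frac{\partial^j \psi(N_0,p,k)}{\partial p^j} \;=\; N_0(N_0-1)\cdots(N_0-j+1)\sum_{l=0}^{j}(-1)^{j-l}\binom{j}{l}\psi(N_0-j,p,k-l),
\]
so its absolute value is bounded by $N_0^j$ times a finite sum of pmfs $\psi(N_0-j,p,k-l)$.

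Consequently, $\sum_k k^r |\partial^j\psi/\partial p^j|$ is bounded by $N_0^j$ times a finite combination (over $l=0,\dots,j$) of the $r$th moments $\sum_k k^r \psi(N_0-j,p,k-l)$. Expanding $k^r=((k-l)+l)^r$ via the binomial theorem, each such moment is a finite combination of the integer moments of order $\leq r$ of a binomial random variable with parameters $N_0-j$ and $p$; by Lemma \ref{lem-bino-poly-mom} these moments are polynomials of degree at most $r$ separately in $N_0-j$ and $p$. Thus the whole sum $\sum_k k^r |\partial^j\psi/\partial p^j|$ is a polynomial in $N_0$ and $p$.

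Combining the pieces, $\sum_k k^r |\psi^{(i)}(N_0,p,k)|$ is bounded above on $[0,\delta]$ by a polynomial in $N_0(x)$, $\sup_\tau p(x,\tau)$, and $\sup_\tau |p^{(l)}(x,\tau)|$ for $l=1,\dots,i$. The hypotheses guarantee that each of these is polynomially bounded in $|x|$, and since the class of polynomial-growth functions is closed under sums and products, the conclusion follows. The only mildly subtle point (and the place that most resembles an ``obstacle'') is book-keeping in Fa\`a di Bruno's expansion and verifying that $N_0$ is treated as $\tau$-independent in the recursion, so that each $\tau$-derivative consumes one derivative of $p$ rather than acting on the combinatorial factor $N_0(N_0-1)\cdots(N_0-j+1)$; once this is noted the argument is a direct parallel of the Poisson case.
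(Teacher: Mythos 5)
Your proposal is correct and follows essentially the same route as the paper: the paper states the relation $\psi^{(1)}(N_0,p,k) = N_0\, p^{(1)}\bigl(\psi(N_0-1,p,k-1)-\psi(N_0-1,p,k)\bigr)$, invokes "repeated application" to handle higher derivatives, and concludes via Lemma \ref{lem-bino-poly-mom}. You have simply made the "repeated application" explicit by separating the chain rule (Fa\`a di Bruno in $p$) from the iterated $p$-shift identity, which is a faithful and more detailed rendering of the same argument.
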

\begin{proof}
It is straight forward to verify the relation
\[
\psi^{(1)}(N_0,p,k) = N_0 p^{(1)} \left( \psi(N_0-1,p,k-1) - \psi(N_0-1,p,k)
\right), \;\; k \in \{0,\dots,N_0\},
\] 
where the convention that $\psi(N_0,p,k)=0$ for $k \notin \{0,1,\dots,N_0\}$
is used. By repeated application one can relate $\psi^{(i)}$ for
$i=2,\dots,q+1$ also to $\psi$. 
Then the result follows from Lemma \ref{lem-bino-poly-mom}.
\end{proof}

\begin{theorem}\label{thm2_ass5}
Suppose the tau leap method generates $K_j$ for $j=1,\dots,M$ to be
independent conditioned on current state $x$ and each $K_j$ is either
binomially or Poisson distributed with their distributions satisfying the
assumptions of Lemmas \ref{lem-bino-deriv-bnd} and \ref{lem-poiss-deriv-bnd}. 
Then the hypotheses of Theorem \ref{thm_Ass5} are satisfied and thus
Assumption 5 holds.
\end{theorem}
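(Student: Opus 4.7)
The plan is to reduce everything to the single-reaction case handled by Lemmas \ref{lem-bino-deriv-bnd} and \ref{lem-poiss-deriv-bnd}. By the conditional independence of $K_1,\dots,K_M$ given $x$, the joint mass function factors as
\[
\tilde{\phi}(\tau,x;k) \;=\; \prod_{j=1}^M \psi_j(\tau,x;k_j),
\]
where $\psi_j$ is the Poisson or binomial mass function for $K_j$ whose parameters depend on $x$ and $\tau$ as in the hypotheses of those lemmas. Since each $\psi_j$ is $q+1$ times continuously differentiable in $\tau$ on some $[0,\delta_j]$, so is the finite product, on $\delta := \min_j \delta_j$. Thus the first hypothesis of Theorem \ref{thm_Ass5} is immediate.

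Next I would apply the multivariable Leibniz rule to express
\[
\tilde{\phi}^{(i)}(\tau,x;k) \;=\; \sum_{i_1+\cdots+i_M = i} \binom{i}{i_1,\dots,i_M} \prod_{j=1}^M \psi_j^{(i_j)}(\tau,x;k_j),
\]
and define the pointwise bound
\[
\mu_{k,i}(x) \;=\; \sum_{i_1+\cdots+i_M=i} \binom{i}{i_1,\dots,i_M} \prod_{j=1}^M \sup_{\tau \in [0,\delta]} |\psi_j^{(i_j)}(\tau,x;k_j)|.
\]
This clearly dominates $|\tilde{\phi}^{(i)}(\tau,x;k)|$ uniformly in $\tau$, handling the second hypothesis of Theorem \ref{thm_Ass5}.

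For the summability condition I would use norm equivalence on $\real^M$ to write $|k|^r \leq C_r \sum_{|\ell|\leq r} k^\ell$, where $\ell=(\ell_1,\dots,\ell_M)$ is a multi-index and $k^\ell = k_1^{\ell_1}\cdots k_M^{\ell_M}$. Substituting into $\sum_k |k|^r \mu_{k,i}(x)$ and interchanging the finite sums over $\ell$ and over the partitions $i_1+\cdots+i_M=i$ with the sum over $k \in \posint^M$, the product structure lets the $k$-sum factor as
\[
\sum_{k \in \posint^M} k^\ell \prod_j \sup_\tau |\psi_j^{(i_j)}(\tau,x;k_j)| \;=\; \prod_{j=1}^M \Big(\sum_{k_j} k_j^{\ell_j} \sup_\tau |\psi_j^{(i_j)}(\tau,x;k_j)|\Big).
\]
By Lemmas \ref{lem-bino-deriv-bnd} and \ref{lem-poiss-deriv-bnd}, each single-reaction factor is bounded above by a polynomial in $|x|$. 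A finite product (and then a finite sum) of polynomials in $|x|$ is again a polynomial, so choosing $\sigma_{r,i}$ to be its degree and $\eta_{r,i}$ a sufficiently large constant yields the required bound $\sum_k |k|^r \mu_{k,i}(x) \leq \eta_{r,i}(1+|x|^{\sigma_{r,i}})$. The hypotheses of Theorem \ref{thm_Ass5} are therefore all satisfied, and hence Assumption 5 holds.

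The steps are essentially bookkeeping once the factorization is used; the only subtle point is the legitimacy of swapping the unconditional sum over $k \in \posint^M$ with the products, which is fine because for fixed $\ell$ and $i_j$ each single-index series converges (by the lemmas) and all summands are nonnegative, so Tonelli applies. I do not anticipate any serious obstacle beyond keeping the multi-index notation tidy.
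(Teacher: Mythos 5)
Your proposal is correct and follows essentially the same route as the paper: factor $\tilde{\phi}$ by conditional independence, apply the Leibniz rule to the product, and reduce the weighted sum over $k\in\posint^M$ to products of single-index sums controlled by Lemmas \ref{lem-bino-deriv-bnd} and \ref{lem-poiss-deriv-bnd}. The only (cosmetic) difference is that you bound $|k|^r$ by a multinomial expansion $\sum_{|\ell|\le r}k^\ell$ where the paper uses $(k_1+\dots+k_M)^r\le M^r(k_1^r+\dots+k_M^r)$; both factorizations work.
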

\begin{proof}
By the assumed independence of $K_j$ it follows that $\tilde{\phi}$ has a
product form
\[
\tilde{\phi}(\tau,x;k) = \tilde{\phi}_1(\tau,x;k_1) \dots
\tilde{\phi}_M(\tau,x;k_M).
\]
Then for $i=0,1,\dots,q+1$ the $i$th derivative $\tilde{\phi}^{(i)}(\tau,x;k)$ is a linear combination
of terms of the form
\[
\tilde{\phi}^{(i_1)}_1(\tau,x;k_1) \dots \tilde{\phi}^{(i_M)}_M(\tau,x;k_M),
\]
where $i_j \in \{0,1,\dots,q+1\}$ for $j=1,\dots,M$. Noting that 
\[
\begin{aligned}
\sum_k |k|^r |\tilde{\phi}^{(i)}| &= \sum_k (k_1 + \dots +k_M)^r |\tilde{\phi}^{(i)}|\\
&\leq M^r \sum_{k_1} \sum_{k_2} \dots \sum_{k_M} (k_1^r + \dots + k_M^r)
|\tilde{\phi}^{(i_1)}_1| \dots |\tilde{\phi}^{(i_M)}_M|
\end{aligned}  
\]
the result follows from using Lemmas \ref{lem-bino-deriv-bnd} and \ref{lem-poiss-deriv-bnd}.
\end{proof}
 
\begin{corollary}\label{cor_ass3}
Suppose that the conditions of Theorem \ref{thm2_ass5} and the extra
conditions of \ref{thm_Ass3} hold. Then Assumption 3 holds.
\end{corollary}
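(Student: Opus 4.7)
The plan is to chain the two preceding results: Theorem \ref{thm2_ass5} verifies the hypotheses needed by Theorem \ref{thm_Ass3}, and then Theorem \ref{thm_Ass3} directly yields Assumption 3. The ``extra conditions of Theorem \ref{thm_Ass3}'' referred to in the statement consist of Assumption 1 (polynomial growth of propensities) together with the pointwise consistency identity \eqref{eq-tilde-consist} for $\tilde{\phi}$; everything else that Theorem \ref{thm_Ass3} requires is precisely ``the conditions of Theorem \ref{thm_Ass5}'', which Theorem \ref{thm2_ass5} establishes from the Poisson/binomial setup.

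Concretely, I would first invoke Theorem \ref{thm2_ass5}: under the assumed conditional independence of the $K_j$ and the regularity requirements of Lemmas \ref{lem-poiss-deriv-bnd} and \ref{lem-bino-deriv-bnd} on the Poisson rates and binomial parameters, the product structure of $\tilde{\phi}$ gives the uniform polynomial-in-$|x|$ bounds on $\sum_k |k|^r |\tilde{\phi}^{(i)}(\tau,x;k)|$ that the hypotheses of Theorem \ref{thm_Ass5} demand. Second, I would simply cite Theorem \ref{thm_Ass3}: with the hypotheses of Theorem \ref{thm_Ass5} now in hand, and with Assumption 1 and \eqref{eq-tilde-consist} included by hypothesis of the corollary, Theorem \ref{thm_Ass3} yields Assumption 3. (Under the hood the term-by-term differentiation needed to pass from pointwise consistency of $\tilde{\phi}$ to that of $\phi$ via \eqref{eq_phi_tildephi} is exactly what Corollary \ref{cor-termbyterm} supplies.)

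There is no substantive obstacle; the corollary is purely a bookkeeping combination of already-proved implications. The only care worth flagging is to make sure ``extra conditions of Theorem \ref{thm_Ass3}'' is read correctly as the pair (Assumption 1, equation \eqref{eq-tilde-consist}) rather than as the conditions of Theorem \ref{thm_Ass5}, which are already subsumed by the hypotheses of Theorem \ref{thm2_ass5}. The proof therefore fits in one or two sentences.
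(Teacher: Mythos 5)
Your proposal is correct and matches the paper's own (two-line) proof exactly: Theorem \ref{thm2_ass5} supplies the hypotheses of Theorem \ref{thm_Ass5}, and then Theorem \ref{thm_Ass3}, together with the extra conditions (Assumption 1 and \eqref{eq-tilde-consist}), gives Assumption 3. Your reading of ``extra conditions'' and your aside about Corollary \ref{cor-termbyterm} handling the term-by-term differentiation are both consistent with the paper.
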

\begin{proof}
The conditions of Theorem \ref{thm_Ass5} are implied by conditions of Theorem
\ref{thm2_ass5}. Given the extra conditions of Theorem \ref{thm_Ass3} the
conclusions of Theorem \ref{thm_Ass3} follow. 
\end{proof}

\begin{lemma}
\label{lem-poiss-deriv}
Let $K$ be Poisson distributed with parameter $\lambda$ where $\lambda=\lambda(x,\tau)$
is a function of state $x \in \posint^N$ and step size $\tau \geq 0$. 
Suppose that there exists $\delta>0$ such that for all $x \in \posint^N$ and $\tau \in [0,\delta]$, $\lambda$ is continuously differentiable
in $\tau$, and the supremum of $\lambda,|\lambda^{(1)}|$ over
$\tau \in [0,\delta]$ is bounded above by a polynomial of degree $s$ in $|x|$.
Then for $\tau \in [0,\delta]$ and for each $r \in \nat$ 
the supremum of $|d E(K^r)/d \tau|$ over $\tau \in [0,\delta]$ 
is bounded by a polynomial of degree $rs$ in $|x|$. 
\end{lemma}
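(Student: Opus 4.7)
The natural plan is to reduce the $\tau$-derivative of the $r$th moment to the chain rule applied to a polynomial composition, using Lemma \ref{lem-poiss-poly-mom} to eliminate the infinite Poisson sum up front. By Lemma \ref{lem-poiss-poly-mom}, there exists a (state-independent) polynomial $P_r$ of degree exactly $r$ in its single argument such that $E(K^r) = P_r(\lambda)$ whenever $K$ is Poisson with parameter $\lambda$. In particular, under the hypothesis that $\lambda(x,\tau)$ is $C^1$ in $\tau$ on $[0,\delta]$, the map $\tau \mapsto E(K^r) = P_r(\lambda(x,\tau))$ is also $C^1$ and standard calculus applies without any term-by-term justification, since $P_r$ is a finite polynomial.

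Applying the chain rule gives
\[
\frac{d}{d\tau} E(K^r) \;=\; P_r'(\lambda(x,\tau))\, \lambda^{(1)}(x,\tau).
\]
The first factor $P_r'$ is a polynomial of degree $r-1$ in $\lambda$, so there is a constant $C_r$ with $|P_r'(\lambda)| \leq C_r(1 + |\lambda|^{r-1})$. By hypothesis, $\sup_{\tau \in [0,\delta]} \lambda(x,\tau)$ is bounded above by a polynomial of degree $s$ in $|x|$, and hence $\sup_{\tau \in [0,\delta]} |P_r'(\lambda(x,\tau))|$ is bounded by a polynomial of degree $(r-1)s$ in $|x|$. Combining with the hypothesis that $\sup_{\tau \in [0,\delta]} |\lambda^{(1)}(x,\tau)|$ is bounded by a polynomial of degree $s$ in $|x|$ and taking the product yields a bound of polynomial degree $(r-1)s + s = rs$, which is the stated conclusion.

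There is essentially no hard step: the only thing that could have gone wrong is needing to justify interchange of differentiation and the infinite sum $\sum_k k^r e^{-\lambda}\lambda^k/k!$, but Lemma \ref{lem-poiss-poly-mom} collapses this sum to a finite polynomial in $\lambda$ before any $\tau$-differentiation is performed, so no dominated-convergence argument is required. The remainder is purely an exercise in degree bookkeeping under composition of polynomial growth bounds.
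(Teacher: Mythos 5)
Your proof is correct, and in substance it matches the paper's: both reduce everything to Lemma \ref{lem-poiss-poly-mom} and then do the same degree bookkeeping $(r-1)s + s = rs$. The only difference is how the derivative identity is obtained. The paper observes that, for fixed $x$, $K(x,\tau)$ can be viewed as a time non-homogeneous Poisson process in $\tau$ with intensity $\lambda^{(1)}$, which yields $dE(K^r)/d\tau = \lambda^{(1)}\, E\{(K+1)^r - K^r\}$; you instead write $E(K^r)=P_r(\lambda)$ and apply the chain rule to get $P_r'(\lambda)\,\lambda^{(1)}$. These are the same identity, since for a Poisson variable $\frac{d}{d\lambda}E(K^r) = E\{(K+1)^r - K^r\} = P_r'(\lambda)$. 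If anything, your route is slightly more robust: the non-homogeneous Poisson process interpretation implicitly wants $\lambda^{(1)}\ge 0$ (a nondecreasing cumulative intensity), which the lemma's hypotheses do not assert, whereas the chain rule applied to the finite polynomial $P_r$ needs only that $\lambda$ is $C^1$, exactly as assumed. Your observation that no interchange of sum and derivative needs justifying — because Lemma \ref{lem-poiss-poly-mom} collapses the infinite sum to a polynomial before differentiation — is the right way to see why the argument is elementary.
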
  

\begin{proof}
For a fixed $x \in \posint^N$, the random variable $K(x,\tau)$ is a time 
non-homogeneous Poisson process in $\tau$ with rate (intensity) 
$\lambda^{(1)}(x,\tau)$. It follows that 
\[
d E(K^r)/d \tau = \lambda^{(1)} E\{(K+1)^r -K^r\}.
\]
This together with Lemma \ref{lem-poiss-poly-mom} implies the desired result. 
\end{proof}

\begin{lemma}
\label{lem-bino-deriv}
Let $K$ be binomially distributed with parameters $N_0$ and $p$ where $N_0=N_0(x)$
is a function of state $x \in \posint^N$ and $p=p(x,\tau)$ is a function of
state $x$ and step size $\tau \geq 0$.
Suppose that there exists $\delta>0$ such that for all $x \in
\posint^N$ and $\tau \in [0,\delta]$, $p$ is continuously differentiable
in $\tau$, and the suprema of $|p^{(1)}|$ over
$\tau \in [0,\delta]$ and $N_0(x)$ are bounded above by polynomials of degree 
$s_1$ and $s_2$ respectively in $|x|$.
Then for $\tau \in [0,\delta]$ and for each $r \in \nat$ 
the supremum of $|d E(K^r)/d \tau|$ over $\tau \in [0,\delta]$ 
is bounded by a polynomial of degree $s_1 + r s_2$ in $|x|$. 
\end{lemma}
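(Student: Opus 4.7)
The plan is to mirror the strategy used for the Poisson case in Lemma \ref{lem-poiss-deriv}, taking advantage of the fact that $N_0$ is independent of $\tau$ so that all $\tau$-differentiation passes through $p$ only. Since the support $\{0,1,\dots,N_0(x)\}$ is finite for each fixed $x$, there is no convergence issue and termwise differentiation of $E(K^r)=\sum_k k^r\psi(N_0,p,k)$ is immediate.

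First I would invoke the recursion already established in Lemma \ref{lem-bino-deriv-bnd}, namely
\[
\psi^{(1)}(N_0,p,k) \;=\; N_0\, p^{(1)}\bigl(\psi(N_0-1,p,k-1)-\psi(N_0-1,p,k)\bigr),
\]
and sum against $k^r$. A standard index-shift (or equivalently a telescoping/Abel summation on the right-hand side) yields
\[
\frac{d E(K^r)}{d\tau} \;=\; N_0\,p^{(1)}\,E\!\left\{(1+K')^r - (K')^r\right\},
\]
where $K'$ is binomial with parameters $N_0-1$ and $p$. This is the binomial analogue of the identity $dE(K^r)/d\tau=\lambda^{(1)}E\{(K+1)^r-K^r\}$ used in the proof of Lemma \ref{lem-poiss-deriv}.

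Next, by Lemma \ref{lem-bino-poly-mom} applied to $K'$, the expectation $E\{(1+K')^r-(K')^r\}$ is a polynomial separately in $N_0-1$ and in $p$ of degree at most $r-1$ in each variable. Because $p \in [0,1]$, its contribution is bounded by a constant depending only on $r$; the remaining factor is polynomial in $N_0$ of degree at most $r-1$. Combining with the leading factor $N_0\, p^{(1)}$ gives
\[
\left|\frac{d E(K^r)}{d\tau}\right| \;\leq\; C_r\, |p^{(1)}|\, N_0^{r}
\]
for some constant $C_r$, uniformly in $\tau\in[0,\delta]$.

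Finally I would plug in the hypothesized polynomial bounds: $|p^{(1)}|$ is bounded above by a polynomial of degree $s_1$ in $|x|$, and $N_0(x)$ by one of degree $s_2$, so $N_0(x)^r$ is bounded by a polynomial of degree $rs_2$. The product is therefore bounded by a polynomial of degree $s_1+rs_2$ in $|x|$, which is exactly the claim. There is no real obstacle here; the only subtlety is being careful that the $p$-dependence is absorbed into a constant (thanks to $p\in[0,1]$) so that only $N_0$ contributes to the $|x|$-growth through its degree.
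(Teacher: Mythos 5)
Your proposal is correct and follows essentially the same route as the paper's proof: differentiate $E(K^r)$ via the recursion from Lemma \ref{lem-bino-deriv-bnd} to obtain $dE(K_{N_0}^r)/d\tau = N_0\,p^{(1)}\,E\bigl((K_{N_0-1}+1)^r-K_{N_0-1}^r\bigr)$, then apply Lemma \ref{lem-bino-poly-mom} and absorb the $p$-dependence into a constant since $p\in[0,1]$. Your write-up simply makes explicit the termwise differentiation and degree-counting steps that the paper leaves implicit.
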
  

\begin{proof}
We write $K=K_{N_0}$. Using the relationship mentioned in the proof of Lemma \ref{lem-bino-deriv-bnd}
we obtain that
\[
d E(K_{N_0}^r)/d \tau = N_0 p^{(1)} E\left((K_{N_0-1}+1)^r-K_{N_0-1}^r\right). 
\]
This together with Lemma \ref{lem-bino-poly-mom} implies the result. We note that since $p$ lies in $[0,1]$ we only need to focus on dependence on $N_0$ and $p^{(1)}$.
\end{proof}

\begin{theorem}\label{thm2_ass6}
Suppose that there exists $\alpha>0$ satisfying the hypothesis of Theorem \ref{thm_ass2} and 
that the $K_j$ for $j=M_s+1,\dots,M$ corresponding to the linearly bounded reactions are
(conditioned on current state $x$) are each either
binomially or Poisson distributed with their distributions satisfying the
assumptions of Lemmas \ref{lem-bino-deriv} with $s_1=0$ and $s_2=1$ or \ref{lem-poiss-deriv} with $s=1$ respectively.
 
Additionally suppose that for $x \in \posint^N$ and for $j=1,\dots,M$ that 
$x+\nu K^{(1)} \in \posint^N$ with probability $1$ where $K^{(1)}$ is the $M_s$ vector of the superlinear reaction counts per tau leap. Also suppose that for $x \notin \posint^N$ we have that $K_j=0$ with probability $1$ for all $j$.  
Then the hypotheses of Theorem \ref{thm_ass6} are satisfied and thus
Assumption 6 holds.
\end{theorem}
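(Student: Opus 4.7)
The plan is to verify the hypotheses of Theorem \ref{thm_ass6}. The existence of $\alpha$ satisfying Theorem \ref{thm_ass2} is already assumed, the condition $\tilde{\phi}(\tau,x;0)=1$ for $x \notin \posint^N$ is given directly, and the requirement that the superlinear update alone stays in $\posint^N$ is restated verbatim (with $K^{(1)}$ the superlinear reaction counts). Thus the only nontrivial task is to establish the moment bound \eqref{eq_ml_bnd}, namely that for each $l \in \nat$ there exist $\beta_l, \tilde{\delta}_l > 0$ with
\[
m_l(x,\tau) \;=\; \sum_k |k^{(2)}|^l\, \tilde{\phi}(\tau,x;k) \;\leq\; \beta_l\,(1+|x|^l)\,\tau
\quad \text{for } \tau \in [0,\tilde{\delta}_l].
\]

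The key observation is that at $\tau=0$, each $K_j$ equals $0$ with probability $1$ (a Poisson with parameter $0$, or a binomial with success probability $0$), so $m_l(x,0)=0$ for $l \geq 1$. By the fundamental theorem of calculus it therefore suffices to show $|m_l^{(1)}(x,\tau)|$ is bounded, uniformly for $\tau$ in a neighborhood of $0$, by a constant multiple of $1+|x|^l$. First I would use conditional independence of $K_{M_s+1},\dots,K_M$ to factor
\[
m_l(x,\tau) \;=\; E\bigl((K_{M_s+1}+\dots+K_M)^l\bigr)
\;=\; \sum_{l_{M_s+1}+\dots+l_M=l} \binom{l}{l_{M_s+1},\dots,l_M} \prod_{j=M_s+1}^M E(K_j^{l_j}),
\]
after fixing the norm on $\real^{M-M_s}$ to be the $1$-norm (which is equivalent to any other by Remark \ref{rem_equiv_norm}; the $K_j$ are nonnegative so $|K^{(2)}|_1 = \sum_j K_j$).

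Next I would differentiate the above finite sum in $\tau$ using the product rule. Each summand, after differentiation, becomes a sum of terms in which exactly one factor $E(K_j^{l_j})$ is replaced by $dE(K_j^{l_j})/d\tau$. By Lemmas \ref{lem-poiss-poly-mom} and \ref{lem-bino-poly-mom} the undifferentiated factors $E(K_j^{l_j})$ are polynomials (in the Poisson rate $\lambda$ or binomial parameters $N_0,p$) of degree $l_j$, and by the assumed polynomial growth of these parameters in $|x|$ (degree $s=1$ in the Poisson case, and $s_1=0$, $s_2=1$ in the binomial case) each such factor is bounded, for $\tau$ in some $[0,\delta]$, by a polynomial in $|x|$ of degree at most $l_j$. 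Lemmas \ref{lem-poiss-deriv} and \ref{lem-bino-deriv} applied with the same parameter choices give an analogous bound on $|dE(K_j^{l_j})/d\tau|$ of polynomial degree at most $l_j$ in $|x|$. Multiplying degrees across the product and summing, each monomial contributes a polynomial in $|x|$ of degree at most $l_{M_s+1}+\dots+l_M = l$. Hence $|m_l^{(1)}(x,\tau)| \leq \tilde{\beta}_l(1+|x|^l)$ uniformly on a common interval $\tau \in [0,\tilde{\delta}_l]$, and integrating from $0$ to $\tau$ gives \eqref{eq_ml_bnd} with $\beta_l=\tilde{\beta}_l$.

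The main obstacle is the careful bookkeeping of the polynomial degrees through the multinomial expansion and its $\tau$-derivative, ensuring that the maximum degree in $|x|$ stays at $l$ rather than blowing up. This is precisely why the hypothesis is stated with $s=1$ (Poisson) and $s_1=0,\,s_2=1$ (binomial): each $E(K_j^{l_j})$ and $dE(K_j^{l_j})/d\tau$ costs at most degree $l_j$ in $|x|$, and the total across the product is exactly $l$. Once the derivative bound is in hand, the remaining hypotheses of Theorem \ref{thm_ass6} are immediate from the assumptions, and the conclusion (Assumption 6) follows.
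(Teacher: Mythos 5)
Your proof is correct and follows essentially the same route as the paper's (which is far terser): show $m_l(x,0)=0$, bound $|dm_l/d\tau|$ by a constant times $1+|x|^l$ uniformly on a neighborhood of $\tau=0$ using Lemmas \ref{lem-poiss-poly-mom}, \ref{lem-bino-poly-mom}, \ref{lem-poiss-deriv} and \ref{lem-bino-deriv} with the stated parameter choices, and integrate (the paper invokes the mean value theorem, you the fundamental theorem of calculus --- the same estimate); your degree bookkeeping through the multinomial expansion is right. One caveat: the factorization $E\bigl(\prod_j K_j^{l_j}\bigr)=\prod_j E(K_j^{l_j})$ requires conditional independence of the $K_j$, which Theorem \ref{thm2_ass5} assumes explicitly but the statement of Theorem \ref{thm2_ass6} does not; this is easily sidestepped by the cruder bound $(K_{M_s+1}+\dots+K_M)^l \le (M-M_s)^{l-1}\sum_j K_j^l$, after which each $E(K_j^l)$ is handled individually by Lemmas \ref{lem-poiss-deriv} and \ref{lem-bino-deriv} with no independence needed.
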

\begin{proof}
These assumptions guarantee that with $K^{(2)}=(K_{M_s+1},\dots,K_M)$, 
\[
|d E( |K^{(2)}|^r) /d \tau| \leq \beta_r (1+|x|^r),
\]
for some $\beta_r$ independent of $\tau \in [0,\delta]$ and $x$. Since for $\tau=0$ we have $E(|K|^r)=0$, using mean value theorem we obtain the bounds
\[
E(|K^{(2)}|^r)=m_r \leq \beta_r (1+|x|^r) \tau, \quad \tau \in [0,\delta].
\]
Thus all the assumptions of Theorem \ref{thm_ass6} are satisfied and thus Assumption 6 holds.
\end{proof}

\subsection{Example}
We consider the example of the unbounded reaction system
\begin{equation}
S_1 + S_2 \to S_3, \;\; S_3 \to S_1 + S_2 \;\; S_2 \to 2 S_2, \;\; S_2 \to 0,
\label{ex1}
\end{equation}
where the propensities are assumed to be of the stochastic mass action form:
\begin{equation}
a_1(x) = c_1 x_1 x_2, \; \; a_2(x) = c_2 x_3, \;\; a_3(x) = c_3 x_1, \;\; a_4(x) = c_4 x_2.
\end{equation}
We note that Assumption 1 is clearly satisfied. 

The stoichiometric vectors are $\nu_1 = (-1,-1,1)^T$, $\nu_2 = (0,1,-1)^T$, 
$\nu_3 = (0,1,0)^T$ and $\nu_4=(0,-1,0)^T$. 
%Taking $\alpha = (1,0,1)^T$ we see 
%that $\alpha^T \nu_j \leq 0$ for all $j$. 
%Thus by Theorem \ref{thm_stoich_bdd} we note that 
It is easy to see that $S_1$ and $S_3$ are bounded (if initial conditions are bounded) 
as $(1,0,1)^T \nu_j \leq 0$ for all $j$ implying that $X_1(t) + X_3(t) \leq X_1(0) + X_3(0)$.  
However $S_2$ is not bounded because of reaction $3$ and thus the system is unbounded.  
However since $\alpha=(1,1,1)^T$ satisfies the hypothesis of Theorem \ref{thm_ass2} we
see that Assumption 2 is satisfied. 

Suppose we use a tau leap update following the REMM-$\tau$ method \cite{rathinam-elsamad-REMM-JCP}:
\[
X(t+\tau) = X(t) + \sum_{j=1}^M \nu_j K_j
\]
where $K_1 \sim \text{Binomial}(N_1,p_1)$, $K_2 \sim \text{Binomial}(N_2,p_2)$,
$K_3 \sim \text{Poisson}(\lambda_3)$ and $K_4 \sim \text{Binomial}(N_4,p_4)$,
where $K_j$ are all independent conditioned on $X(t)=x \in \posint^N$ and
\[
\begin{aligned}
N_1 &= \min\{x_1,x_2\}, \;\; p_1 =  \frac{\tilde{c}_1}{\tilde{c}_1+c_2}
(1-e^{-(\tilde{c}_1+c_2})\tau),\\
N_2 &= x_3, \;\; p_2 = \frac{c_2}{\tilde{c}_1+c_2}
(1-e^{-(\tilde{c}_1+c_2})\tau),\\
\lambda_3 &= \frac{c_3 x_2}{c_4} (1 - e^{-c_4 \tau}),\\
N_4 &= x_2, \;\; p_4 = (1-e^{-c_4 \tau}),
\end{aligned}
\]
where 
\[
\begin{aligned}
\tilde{c}_1 &= (\max\{x_1,x_2\}+1)c_1 \, \text{if } \min\{x_1,x_2\}=0,\\
\tilde{c}_1 &= \max\{x_1,x_2\} c_1 \, \text{else}.\\
\end{aligned}
\]
If $X(t)=x \notin \posint^N$ then we set $K_j=0$ for all $j$ and the update is
$X(t+\tau)=x$. We note that this particular step differs from the way negativity was handled in \cite{rathinam-elsamad-REMM-JCP}, but freezing the tau leap process once it leaves $\posint^N$ allows for easier verification of Assumption 6 
as stated in Theorem \ref{thm_ass6}.

It is clear that $N_1,N_2$ and $N_4$ are bounded by a polynomial in $|x|$. 
It is also clear that $p_1,p_2,p_4$ and $\lambda_3$ are infinitely
differentiable and the maximum of their derivatives on any bounded interval
$[0,\delta]$ of $\tau$ is also bounded by a polynomial in $|x|$. Thus the
hypotheses of Theorem \ref{thm2_ass5} are satisfied and hence Assumption 5
holds. 

The REMM-$\tau$ method was designed to satisfy the conditions that
\[
\begin{aligned}
\tilde{\phi}^{(1)}(0,x;0) &= -a_0(x),\\
\tilde{\phi}^{(1)}(0,x;e_j) &= a_j(x), \;\; j=1,\dots,M,\\
\tilde{\phi}^{(1)}(0,x;k) &= 0, \;\; k \notin \{0,e_1,e_2,\cdots,e_M\},
\end{aligned}
\] 
which can be directly verified by differentiation the details of which we shall omit. 
Thus by Corollary \ref{cor_ass3} pointwise consistency Assumption 3 follows. 

As Assumptions 1 through 5 hold, by Theorem \ref{thm_totv_conv} the method is 
first order ($O(\tau)$) convergent in total variation.  

In order to verify Assumption 6 we shall verify the conditions of Theorem \ref{thm2_ass6}. Firstly we note that the only superlinear reaction is $1$, and that 
as $0 \leq K_1 \leq \min\{x_1,x_2\}$ it is clear that starting from a state $x \in \posint^N$ the state reached after the update $x + \nu_1 K_1$ still remains in $\posint^N$.  

We note that $|\lambda^{(1)}| \leq c_3 x_2$ and hence can take $s=1$ in Lemma \ref{lem-poiss-deriv} regarding $K_3$. Also we note that $N_2(x) \leq |x|$ and $N_4(x) \leq |x|$
and $|p_2^{(1)}| \leq c_2$ and $|p_4^{(1)}| \leq c_4$. Thus we can take $s_1=0$ and $s_2=1$ regarding both $K_2$ and $K_4$ in Lemma \ref{lem-bino-deriv}. 
Hence all the conditions of Theorem \ref{thm2_ass6} are satisfied     
and we can conclude that Assumption 6 holds and hence by Theorem \ref{thm_conv} the method is first order convergent in $r$th moment variation for each $r \in \nat$. 
This also implies the convergence of all moments.   

\section{Discussion of results and concluding remarks}
For the purpose of this discussion we need to differentiate the type of 
convergence considered in this paper from the type of analysis which 
relates $\tau$ to system size $V$ as $\tau=V^{-\beta}$ and studies convergence 
as $V \to \infty$. We shall refer to the former as 
{\em convergence in fixed system sense} and the latter as 
{\em convergence in large system limit}.  

While our (fixed system sense) convergence results were stated for general
order of convergence $O(\tau^q)$, we have not seen a practical tau leap method
that is $O(\tau^2)$ convergent in general in the fixed system sense. 
The {\em weak trapezoidal method} mentioned in \cite{Hu-Li+JCP11, Anderson-Koyama-MMS12}
was shown to be 2nd order consistent under the restrictive assumption that 
$\xi_1 a_j(x+\nu_k)- \xi_1 a_j(x) \geq a_j(x)$ for all
$x,j$ and $k$ where $\xi_1 \in [2,\infty)$ is a method parameter. 
This leads to the condition that $a_j(x+\nu_k) \geq a_j(x) (1 - 1/\xi_1)$ for
all $x,j,k$. When $x$ is on the boundary of $\posint^N$ this may not hold for
most systems. However, if with probability close to $1$ the system state is
far away from the ``bad'' boundaries, then one expects this method to be more
accurate and for this to be valid one expects the system size to be large. The {\em midpoint tau} method is shown to be $O(\tau^2)$ convergent when $V \to
\infty$ with $\tau = V^{-\beta}$ \cite{Anderson-Ganguly+AAP11}. However, midpoint method is
only first order convergent in the fixed system sense. In practice, for 
modestly large molecular copy numbers one may expect 
higher accuracy for both these methods(than the explicit tau leap), 
while for low copy numbers one may still expect these methods to be well 
behaved because they are first order convergent in the fixed system sense. 

As a general rule, if a tau leap method shows higher order accuracy in
the large system limit and is first order convergent in the fixed system sense 
it will be expected to be more effective than the first order convergent 
explicit tau. On the other hand if a method is higher order convergent in
the large system limit, but is non-convergent or (even worse) not zero stable 
in the fixed system sense then the method should not be used. 

It is easy to come up with higher order accurate (in the fixed system sense)
tau methods that may not be practical. For instance one may take the tau
update probabilities $\tilde{\phi}(\tau,x;k) = \text{Prob}(K=k \, | \, Y(t)=x)$ 
to agree with exact probabilities $\tilde{p}(\tau,x;k)$ up to $O(\tau^q)$ for 
the case of $|k|=1,\dots,q$, set  $\tilde{\phi}(\tau,x;k) = 0$ for $|k| \geq q+1$ 
and set $\tilde{\phi}(\tau,x;0)$ accordingly. (We note that
$\tilde{p}(\tau,x;k)=O(\tau^{|k|})$, see \cite{rathinam+05MMS} for instance).
Such a naive approach will result in a $O(\tau^2)$ convergent method that 
will leap over at most two reaction events ($q$ events for the case of order $q$), not to mention other practical issues 
that need to be dealt with such as truncated Taylor expansions being for probabilities being non-negative. 

The analysis in this paper does not suggest new tau leap methods. However it 
does provide some guidance to ensure that a tau leap method is convergent and 
zero stable (in the fixed system sense) so that the user does not have to worry about the small step sizes resulting in large errors. The most delicate of the 
assumptions is Assumption 6 which implies zero stability of the tau leap
method (in term of moments). Zero-stability may not be taken for granted. We refer to \cite{Hutzenthaler+10} for an example
(in the case of SDEs driven by Brownian motion) showing lack of convergence 
(and lack of zero stability) of the moments of the Euler method. Theorem \ref{thm_ass6} provides sufficient conditions under which 
Assumption 6 can be verified and suggests that it is best to use bounded
random variables (such as Binomials) in the tau update of superlinear reactions (see Remark \ref{rem-ass6}).   

Finally we like to note that finding a tau leap method 
that is $O(\tau^2)$ convergent {\em uniformly in system size} $V$ (after a
suitable scaling by a power of $V$) might prove to be useful. 
The error estimates derived in 
\cite{Anderson-Ganguly+AAP11, Hu-Li+JCP11, Anderson-Koyama-MMS12} contain
system size $V$ and step size $\tau$ (under the bounded system condition 
and/or global Lipschitz condition on propensities). 
None of the methods presented there are $O(\tau^2)$ convergent uniformly 
in $V$. We believe that the analysis in this paper can be extended to 
include the dependence of the error in the moments of a tau leap method on $V$
and $\tau$ for the case of unbounded systems with nonlinear but polynomial 
growth propensities. While this exercise will not automatically result in a  
``$O(\tau^2)$ convergent uniformly in $V$'' method, it will help provide some
insights towards the construction of such methods.  

%{\bf Acknowledgment:}
%We like to thank the anonymous referees for the constructive feedback that helped improve this manuscript. 
  
\bibliography{references}
\bibliographystyle{siam}

\end{document}